\def\5n{\negthinspace \negthinspace \negthinspace \negthinspace \negthinspace }
\def\4n{\negthinspace \negthinspace \negthinspace \negthinspace }
\def\3n{\negthinspace \negthinspace \negthinspace }
\def\2n{\negthinspace \negthinspace }
\def\1n{\negthinspace }
\def\dbC{\mathbb{C}}     
\def\dbE{\mathbb{E}}     
\def\dbF{\mathbb{F}} \def\sF{\mathscr{F}}    
\def\dbH{\mathbb{H}}
\def\dbN{\mathbb{N}}     
\def\dbP{\mathbb{P}}     
\def\dbR{\mathbb{R}}     
\def\dbS{\mathbb{S}}     
   \def\cU{{\cal U}}
\def\Om{\Omega}
\def\ss{\smallskip}                
\def\ms{\medskip}                
\def\ds{\displaystyle}           
\def\ra{\rightarrow}      
\def\no{\noindent}        \def\q{\quad}                      
\def\ns{\noalign{\ss}}    \def\qq{\qquad}                    
    \def\hb{\hbox}                     
         \def\rf{\eqref}                    
  \def\deq{\triangleq}               
            \def\({\Big (}
\def\les{\leqslant}                  \def\){\Big )}
\def\ges{\geqslant}       \def\esssup{\mathop{\rm esssup}}   \def\[{\Big[}
           \def\]{\Big]}
                   \def\cd{\cdot}
\def\nn{\nonumber}        \def\ts{\times}
\def\a{\alpha}           \def\g{\gamma}   \def\O{\Omega}   
\def\b{\beta}            \def\d{\delta}        
\def\e{\varepsilon}             
    \def\t{\tau}     \def\f{\varphi}  \def\i{\infty}   
\theoremstyle{plain}
\newtheorem{thm}{\bf Theorem}[section]
\newtheorem{lem}[thm]{\bf Lemma}
\newtheorem{prop}[thm]{\bf Proposition}
\newtheorem{defn}[thm]{\bf Definition}
\theoremstyle{rmk}
\newtheorem{rmk}[thm]{\bf Remark}
\begin{document}

\title{{\bf Dynamic programming principle for delayed stochastic recursive optimal control problem and HJB equation with non-Lipschitz generator}\thanks{This paper is supported by National Key R\&D Program of China (Grant Nos.2022YFA1006101, 2018YFA0703900), National Natural Science Foundation of China (Grant Nos.12101291, 12371445), Guangdong Basic and Applied Basic Research Foundation (grant No. 2022A1515012017), the Taishan Scholars Climbing Program of Shandong (grant No. TSPD20210302), and Science and Technology Commission of Shanghai Municipality (Grant No.22ZR1407600).}
}


\author{
Jiaqiang Wen\thanks{Department of Mathematics, Southern University of Science and Technology, Shenzhen 518055, China (Email: {\tt wenjq@sustech.edu.cn}).
}~,~~~
Zhen Wu\thanks{School of Mathematics, Shandong University, Jinan 250100, China (Email:{\tt wuzhen@sdu.edu.cn)}.
}~,~~~
Qi Zhang\thanks{Corresponding author. School of Mathematical Sciences, Fudan University, Shanghai 200433, China ({\tt qzh@fudan.edu.cn}). }
}

\date{}
\maketitle


\no\bf Abstract. \rm
In this paper, we study the delayed stochastic recursive optimal control problem with a non-Lipschitz generator, in which both the dynamics of the control system and the recursive cost functional depend on the past path segment of the state process in a general form.
First, the dynamic programming principle for this control problem is obtained. Then, by the generalized comparison theorem of backward stochastic differential equations and the stability of viscosity solutions, we establish the connection between the value function and the viscosity solution
of the associated Hamilton-Jacobi-Bellman equation. Finally, an application to the consumption-investment problem under the delayed continuous-time Epstein-Zin utility with a non-Lipschitz generator is presented.

\ms

\no\bf Key words: \rm
delayed stochastic recursive control problem, non-Lipschitz generator, dynamic programming principle, Hamilton-Jacobi-Bellman equation, stochastic differential utility.

\ms

\no\bf AMS subject classifications. \rm
90C39, 60H10, 93E20.

\section{Introduction}

The classical stochastic control theory appears with the birth of stochastic analysis and developed rapidly in recent decades due to its wide range of applications (see Yong--Zhou \cite{Yong-Zhou 1999}), and one of its important applications is the optimal consumption-investment problem under stochastic differential utility (SDU, for short), which was put forward by Duffie--Epstein \cite{Duffie-Epstein-92} in a conditional expectation form. Actually,  SDU  is equivalent to the nonlinear backward stochastic differential equations (BSDEs, for short) and its control problem is the stochastic recursive control problem introduced firstly by Peng \cite{Peng-92} and developed by  Wu--Yu \cite{Wu-Yu 2008}, Li--Peng \cite{Li-Peng 2009}, Pu--Zhang \cite{Pu-Zhang-18}, 
to name but a few.
Due to the importance of BSDEs in modern mathematical finance (see  El Karoui--Peng--Quenez \cite{Karoui-Peng-Quenez 1997}), both stochastic recursive control theory and the optimal consumption-investment problem of stochastic differential utilities achieved great progresses in the past few decades.

\ms

On the other hand, the research of many natural and social phenomena shows that the future development of many processes depends on not only their present state but also essentially on their historical information.
For example, the immune response of the vaccine of COVID-19 depends on the complete history of the vaccinator's exposure to microbial infection antigens; 
the quality of a harvest does not only depend on the current temperature of the environment but also on the whole pattern of the past temperatures;
the price of the option depends not only on its current value but also on the whole history of the underlying price, etc.
In the theory of stochastic processes, such phenomenons can be described by the stochastic differential delayed equations (SDDEs, for short). See Mohammed \cite{Mohammed 1984,Mohammed 1998}, Chang \cite{Chang 2008}, and the references therein.

\ms

Hence, an interesting problem is whether the stochastic differential utilities can be developed to stochastic control problems with delay. The answer is positive, although most problems remain practically intractable due to the complex infinite-dimensional state-space framework. Here we name but a few existing results. For example, Larssen \cite{Larssen 2002} obtained the dynamic programming principle for the systems of stochastic control with delay. Later, Larssen--Risebro \cite{Larssen-Risebro 2003} further studied the optimal consumption problems based on the stochastic delay
equations and presented the financial application of dynamic programming principle. Arriojas--Hu--Mohammed--Pap \cite{Arriojas-Hu-Mohammed--Pap 2007} got a delayed Black--Scholes formula.
Fuhrman--Masiero--Tessitore \cite{Marco-2010} studied a forward-backward system with delay, its application to optimal stochastic control and the regularity of associated Hamilton-Jacobi-Bellman (HJB, for short) equations. Chen--Wu \cite{Chen-Wu-12} was concerned with one kind of delayed stochastic recursive optimal control problem, proved the dynamic programming principle, and demonstrated that the value function is a  viscosity solution of the corresponding HJB equation. Shi--Xu--Zhang \cite{Shi-Xu-Zhang 2015} studied a general kind of stochastic differential delayed equations, obtained the stochastic maximal principle, and show the application to stochastic portfolio optimization model with delay and bounded memory. Tang--Zhang \cite{Tang-Zhang-15} obtained the dynamic
programming principle in a path-dependent case and studied the associated path-dependent Bellman equations. Li--Wang--Wu \cite{Li-Wang-Wu 2020} focused on a class of linear-quadratic optimal control for time-delay stochastic recursive utility.
It should be pointed out that all the above-mentioned works concerning delayed systems are based on the condition of Lipschitz generator. While in many SDU models the generator is non-Lipschitz with respect to the utility and consumption, see \rf{22.1.21.3} below for an example.

\ms

In this paper, inspired by the financial applications, we are interested in studying the delayed stochastic recursive control problem with non-Lipschitz generator. The dynamic programming principle for the delayed systems will be obtained, and the connection between the value function and the viscosity solution
of the associated Hamilton-Jacobi-Bellman equation will be shown. In order to present our work more clearly, we present the problem in details.
Consider the following controlled SDDE

\begin{equation}\label{state-equation}
\left\{\begin{split}
\ds dX(s)&=b(s,X_s,v(s))dt+\sigma(s,X_s,v(s))dW(t),\q s\in[t,T],\\
\ns\ds X(s)&=\g(s),\q s\in[t-\delta,t],
\end{split}\right.
\end{equation}
where $\g$ comes from the continuous function space $C([0,t];\dbR^n)$ regarded as the past value of the equation, $X_s$ is the segment of the path of $X(\cd)$ from $s-\delta$ up to time $s$ for a fixed $\delta>0$, and $v(\cd)$ is an admissible control process arisen from the admissible control set $\cU$ (see \rf{22.1.21.1} below). For the state equation \rf{state-equation}, the cost functional is associated with the solution of the following BSDE for $s\in[t,T]$,
\begin{equation}\label{BSDE}
 Y^{t,\g;v}(s)=\Phi(X_T^{t,\g;v})+\int_{s}^{T} g(r,X_r^{t,\g;v},Y^{t,\g;v}(r),Z^{t,\g;v}(r),v(r))dr
 -\int_{s}^{T} Z^{t,\g;v}(r)dW(r),
\end{equation}
and the cost functional is defined as below
\begin{equation}\label{cost-functional}
J(t,\g;v)\deq Y^{t,\g;v}(t),\q t\in[0,T].
\end{equation}
Since the recursive utility can be regarded as the conditional expectation form of the solution of BSDE \rf{BSDE}, the stochastic recursive control system in form contains the following SDU
\begin{equation}\label{22.1.21.2}
U^{t,\g;c}(t)=\dbE^{\mathscr{F}_{t}}\left[\Phi(X_T^{t,\g;c})
+\int_{t}^{T} g\left(r,X_r^{t,\g;c}, U^{t,\g;c}(r),c(r)\right) d r\right],
\end{equation}
where $c(\cd)$ is a consumption process serving as the control. Indeed, \rf{22.1.21.2} is a general form of the SDU in Duffie--Epstein \cite{Duffie-Epstein-92}, involving the delay.
The goal of the optimal control problem is to find an optimal control process $\tilde v\in\cU$ to maximize the cost functional \rf{cost-functional} for given $(t,\g)$.

\ms

In this paper, we are committed to relaxing the Lipschitz restriction to the generator, i.e., the generator
$g(\g,u,c)$ in \rf{22.1.21.2} is continuous but not necessarily Lipschitz with respect to both the utility variable $u$ and the consumption variable $c$. For the variable $u$, the generator could be of polynomial growth in our assumptions.
These settings will make a lot of difference in the deduction of the dynamic programming principle and bring troubles in the verification of conditions for the stability of viscosity solutions, which plays the key role in the connection between the value function and the viscosity solution of the associated HJB equation.
Let's present an significant example for the non-Lipschitz generator with respect to utility and consumption, i.e., the well-known continuous-time Epstein-Zin utility with a form
\begin{equation}\label{22.1.21.3}
  g(u,c)=\frac{\vartheta}{1-\frac{1}{\psi}}(1-r) u\bigg[\Big(\frac{c}{\left((1-r) u\right)^{\frac{1}{1-r}}}\Big)^{1-\frac{1}{\psi}}-1\bigg],
\end{equation}
where $\vartheta>0$ represents the rate of time preference, $0<\psi \neq 1$ is the elasticity of intertemporal substitution, and $0<r \neq 1$ is the coefficient of relative risk aversion.
Note that the generator $g(u,c)$ in \rf{22.1.21.3} is non-Lipschitz in $u$ and $c$ but monotonic with respect to $u$ in a wide range of parameters.

\ms

The main difficulties of our problem come from the delayed term and the non-Lipschitz generator.
For the former one, we need to overcome the trouble from the infinite-dimensional of the space $C([0,t];\dbR^n)$ in the verification that the optimal value function satisfies the HJB equation. We manage to do it with the help of the semigroup property of generator in the infinite-dimensional setting, by borrowing ideas from Fuhrman--Masiero--Tessitore \cite{Marco-2010} and Chen--Wu \cite{Chen-Wu-12}.
The non-Lipschitz generator brings troubles in the establishment of the relationship between the value function and its associated HJB equation. We conquer it by applying the techniques for the solvability of BSDE with monotonic coefficients such as in Pardoux \cite{Pardoux 1999} and the stability of viscosity solutions in Fleming--Soner \cite{Fleming-Soner-06}.
%

\ms

The paper is organized as follows. In Section \ref{Sec2}, some preliminaries are presented. In Section \ref{Mainresults}, we give the main results, i.e.,
the dynamic programming principle for delayed stochastic recursive optimal control problem and the relationship between the value function of the delayed stochastic recursive optimal control problem and
the viscosity solution of associated HJB equation, which  develop the results as in \cite{Marco-2010,Chen-Wu-12,Pu-Zhang-18}. In Section \ref{Sec5}, an example concerning continuous-time Epstein-Zin utility with delay and non-Lipschitz generator is presented to demonstrate the applications of our theoretical results to mathematical finance. Some auxiliaries are presented in Section \ref{Auxiliary}.

\section{Preliminaries} \label{Sec2}

Let $(\Omega,\sF,\dbF,\dbP)$ be a complete filtered probability space and a $d$-dimensional standard Brownian motion $\{W(t)\;;0\les t<\i\}$ is defined in it, where $\dbF=\{\sF_t; 0\les t<\i\}$ is the natural filtration of the Brownian motion $W(\cd)$ with $\sF_0$ containing $\mathcal{N}$, the class of all $\dbP$-null sets  of $\sF$.
Let $T>0$ be a fixed terminal time and $\delta\ges 0$ be a given finite time delay.
For each $t\in[0,T]$, we denote by $\dbC_t\deq C([t-\delta,t];\dbR^n)$ the set of bounded continuous $\mathbb{R}^{n}$-valued paths on $[t-\delta,t]$ and
 $$\dbC=\bigcup_{t\in[0,T]}\dbC_{t}.$$
For each $\g_t\in \dbC_t$, it denotes the path from $t-\delta$ up to time $t$, i.e.,
\begin{align*}
\g_t=\gamma_t(\theta)_{t-\delta\les \theta\les t},\q~ t\in[0,T],
\end{align*}
and its value at time $r$ is denoted by $\gamma_t(r)$. 
For each $t, s\in[0,T]$ and  $\g_t,\g_s\in \dbC$, we denote
\begin{align*}
\|\gamma_t\|_\dbC\deq& \sup_{t-\delta\les r\les t}|\gamma_t(r)|,\\
\|\g_t-\g_s\|_\dbC\deq &\sup_{(t-\delta)\wedge(s-\delta)\les r\les t\vee s}|\gamma_t((t-\delta)\vee r\wedge t)-\gamma_s((s-\delta)\vee r\wedge s)|.
\end{align*}
Then, it is obvious that $\dbC_t$ is a Banach space with respect to $\|\cdot \|$.
Next, for $t\in[0,T]$, $q\ges1$ and Euclidean space $\dbH$ (which could be $\dbR^n$, $\dbR^{n\ts m}$, $\dbS^n$, etc.), we introduce the following spaces of functions and processes:
\begin{align*}
\ds L^{2 q}(\O,\sF_t;\dbH)\deq &\Big\{\xi:\O\to\dbH\bigm|\xi\hb{ is
  $\sF_t$-measurable, }\dbE|\xi|^{2 q}<\i\Big\},\\
\ns\ds M_\dbF^{2 q}(t,T;\dbH)\deq &\bigg\{\f:[t,T]\times\O\to\dbH\bigm|\f(\cd)\hb{ is
    $\dbF$-adapted and}\ \dbE\int^T_t|\f(s)|^{2 q}ds<\i\bigg\}, \\
S^{2 q}_{\dbF}(t, T ;\dbH)\deq &\bigg\{\f:[t,T]\times\O\to\dbH\bigm|\f(\cd)\hb{
    is $\dbF$-adapted, continuous and}\ \dbE\sup_{s\in[t,T]}|\f(s)|^{2q}<\i\bigg\},\\
\ns\ds L^{2q}_{\dbF}(\Om;\dbC_t)\deq&~\bigg\{\g:\Om\to\dbC_t\bigm|\g(\cd)\hb{ is
$\dbF$-adapted and}\ \dbE\|\g_t\|_\dbC^{2q}<\i\bigg\}
%
%
\end{align*}
and
\begin{equation}\label{22.1.21.1}
\cU\deq\Big\{v\in M_{\dbF}^2(0,T;\dbR^m)\big|
\hb{ the process $v$ takes value in a compact set $U\in\dbR^m$}\Big\}.
\end{equation}
Here $\cU$ is called the admissible control set.

\ms

For an element of the space $L^{2}_{\dbF}(\Om;\dbC_t)$, we have the following proposition.

\begin{prop}[Lemma 3.4 in \cite{Chen-Wu-12}]\label{9.22.2}\sl
For any bounded $\Upsilon\in L^{2}_{\dbF}(\Om;\dbC_t)$, there is a sequence
$$\Upsilon^n=\sum^n_{i=1}\g^iI_{A_i}$$
converging to $\Upsilon$ in the space $L^{2}_{\dbF}(\Om;\dbC_t)$, where
$\g^i\in\dbC_t$ with $1\les i\les n$, $n\in\mathbb{N}$ and $\{A_i\}^n_{i=1}$ is a partition of $(\Om,\sF_t)$.
\end{prop}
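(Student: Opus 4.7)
The plan is to leverage the separability of the Banach path space $\dbC_t=C([t-\delta,t];\dbR^n)$ and reduce the result to a standard simple-function approximation in the Bochner sense. Intuitively, $\Upsilon:\Om\to\dbC_t$ is a strongly measurable, bounded, $\dbC_t$-valued random element, and any such element is approximable by $\dbC_t$-valued simple functions; the only work is to write the approximation in the form the proposition demands.

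First I would fix a countable dense subset $\{\beta^i\}_{i\ges 1}\subset\dbC_t$ and, for each $\e>0$, form the Borel partition of $\dbC_t$ given by
$$B_i^\e\deq\{\g\in\dbC_t:\|\g-\beta^i\|_\dbC<\e\}\setminus\bigcup_{j<i}B_j^\e,\q i\ges 1.$$
Since $\Upsilon$ is $\sF_t$-measurable with values in the separable space $\dbC_t$, Pettis's theorem ensures that $\Upsilon$ is strongly (equivalently, Borel) measurable, so the preimages $A_i^\e\deq\Upsilon^{-1}(B_i^\e)$ belong to $\sF_t$ and give a countable $\sF_t$-partition of $\Om$. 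Setting $\widetilde\Upsilon^\e\deq\sum_{i\ges 1}\beta^i I_{A_i^\e}$, the construction yields the pointwise estimate $\|\widetilde\Upsilon^\e(\o)-\Upsilon(\o)\|_\dbC\les\e$ for every $\o\in\Om$.

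Next I would turn this countable simple function into a finite one by using the boundedness hypothesis. Let $M$ be a uniform bound for $\|\Upsilon\|_\dbC$; then $\|\beta^i\|_\dbC\les M+\e$ whenever $A_i^\e\neq\emptyset$. Since the $A_i^\e$ partition $\Om$, the tail sets $E_N\deq\bigcup_{i>N}A_i^\e$ decrease to the empty set, so dominated convergence gives $\dbE\|\widetilde\Upsilon^\e I_{E_N}\|_\dbC^2\to 0$ as $N\to\i$. I would pick $N=N(\e)$ so that this tail is at most $\e^2$, and then merge the tail into the first atom by setting $\widetilde A_1\deq A_1^\e\cup E_N$ and $\widetilde A_i\deq A_i^\e$ for $2\les i\les N$. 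The finite simple random variable
$$\Upsilon^{(\e)}\deq\sum_{i=1}^N\beta^i I_{\widetilde A_i}$$
is then of exactly the required form, and the triangle inequality yields $\dbE\|\Upsilon^{(\e)}-\Upsilon\|_\dbC^2\les C(M+1)^2\e^2$. Letting $\e=\e_n\downarrow 0$, relabeling each $\beta^i$ appearing as $\g^i$ and each $\widetilde A_i$ as $A_i$, and padding with zero-weighted indices if necessary, produces the desired sequence $\Upsilon^n=\sum_{i=1}^n\g^i I_{A_i}$ converging to $\Upsilon$ in $L^2_\dbF(\Om;\dbC_t)$.

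The main obstacle is the measurability step: one must confirm that preimages of open balls in $\dbC_t$ really lie in $\sF_t$, and that the resulting Voronoi-type sets $B_i^\e$ are Borel. This hinges on $\dbC_t$ being a \emph{separable} Banach space, in which case $\sF_t$-measurability in the Bochner sense coincides with Borel measurability into $(\dbC_t,\mathrm{Borel})$; without separability the above construction would fail because arbitrary open balls need not generate the $\sigma$-algebra with respect to which $\Upsilon$ was assumed measurable. Everything else is dominated convergence and a cosmetic rearrangement of indices.
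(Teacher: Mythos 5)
The paper offers no proof of this proposition at all: it is imported verbatim as Lemma~3.4 of Chen--Wu \cite{Chen-Wu-12}, so there is nothing in-paper to compare against. Your argument --- separability of $C([t-\delta,t];\dbR^n)$, a Voronoi-type Borel partition around a countable dense set, truncation of the tail using the uniform bound, and a relabelling --- is the standard simple-function approximation for Bochner-measurable maps into a separable Banach space, is correct, and is essentially the same construction used in the cited source.
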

For the coefficients of the state equation \rf{state-equation}, we assume the conditions as follows.
\begin{itemize}
  \item [(A1)] $b(t,\g,v):[0, T] \times \dbC \times U \rightarrow \mathbb{R}^{n}$ and $\sigma(t, \g, v):[0, T] \times \dbC \times U \rightarrow \mathbb{R}^{n \times d}$ are joint measurable and continuous with respect to $t$.
  %
  \item [(A2)] There is a constant $L \ges 0$ such that for any $t\in[0,T]$, $\g,\bar\g\in\dbC$ and $v,\bar v\in U$, we have
         $$|b(t,\g,v)-b(t,\bar\g,\bar v)|+|\sigma(t,\g,v)-\sigma(t,\bar\g,\bar v)|
         \les L\left(\|\g-\bar\g\|_{\dbC}+|v-\bar v|\right).$$
\end{itemize}
For the sake of the proof of dynamic programming principle, we would like to consider a general stochastic differential equation (SDE, for short) with a random initial variable. By using a standard argument, we have  the following \autoref{21.9.1.4}, which is a non-essential extension of Theorem 2.3 in Fuhrman--Masiero--Tessitore \cite{Marco-2010}.

\begin{prop}\label{21.9.1.4}\sl
Assume (A1)-(A2) and $q \ges 1$. Then for any $t \in[0, T]$, $v \in \mathcal{U}$, $\Upsilon\in L^{2}_{\dbF}(\Om;\dbC_t)$, the following SDE
\begin{equation}\label{state-equation-1}
\left\{\begin{split}
\ds & dX^{t, \Upsilon ; v}(s)=b(s,X^{t, \Upsilon ; v}_s,v(s))dt+\sigma(s,X^{t, \Upsilon ; v}_s,v(s))dW(t),\q s\in[t,T],\\
\ns\ds & X^{t, \Upsilon ; v}(s)=\Upsilon_t(s),\q s\in[t-\delta,t],
\end{split}\right.
\end{equation}
has a unique strong solution $X^{t, \Upsilon ; v}(\cd) \in S_{\dbF}^{2 q}(t, T ; \mathbb{R}^{n})$.
Moreover, there is a constant $C\ges0$ depending only on $K,\ L$ and $T$ such that for any
$s\in[t, T], v, v^{\prime} \in \mathcal{U}, \Upsilon, \Upsilon^{\prime} \in L^{2}_{\dbF}(\Om;\dbC_t)$, we have
$$
\dbE^{\sF_t}\Big[\|X^{t, \Upsilon ; v}_s\|^{2q}_{\dbC}\Big]
\les C\Big(1+\|\Upsilon_t\|_{\dbC}^{2 q}+\dbE^{\sF_t}\int_{t}^{T}|v(s)|^{2 q} d s\Big)
$$
and
$$
\dbE^{\sF_t}\Big[\|X^{t, \Upsilon ; v}_s-X^{t, \Upsilon' ; v'}_s\|^{2q}_{\dbC}\Big]
\les C\Big(\|\Upsilon_t-\Upsilon'_t\|_{\dbC}^{2 q}+\dbE^{\sF_t}\int_{t}^{T}|v(s)-v'(s)|^{2 q} d s\Big).
$$
\end{prop}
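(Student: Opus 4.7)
My plan is to prove this proposition by a standard Banach fixed-point argument on $S^{2q}_{\dbF}(t,T;\dbR^n)$ adapted to the delayed, random-initial setting, and then to derive the two quantitative bounds by combining the Burkholder--Davis--Gundy (BDG) inequality with Gronwall's lemma. Because the statement is announced as a non-essential extension of Theorem 2.3 in \cite{Marco-2010}, the only new features are the random $\dbC_t$-valued initial segment $\Upsilon$ and the appearance of the conditional expectation $\dbE^{\sF_t}$ in the estimates.

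First I would recast the problem as a fixed point on $[t,T]$. Any candidate $X \in S^{2q}_{\dbF}(t,T;\dbR^n)$ is extended to $[t-\delta,T]$ by setting $X(s)=\Upsilon_t(s)$ for $s\in[t-\delta,t]$, so that the segment $X_s\in\dbC_s$ is well defined for every $s\in[t,T]$. Define
\[
\Phi(X)(s) = \Upsilon_t(t) + \int_t^s b(r,X_r,v(r))\,dr + \int_t^s \sigma(r,X_r,v(r))\,dW(r), \quad s\in[t,T].
\]
Using (A1)--(A2) together with the $\sF_t$-measurability of $\Upsilon_t$ and the $\dbF$-adaptedness of $v$, the map $\Phi$ sends $S^{2q}_{\dbF}(t,T;\dbR^n)$ to itself. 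Since two candidates $X,X'$ agree with $\Upsilon_t$ on $[t-\delta,t]$, one has $\|X_s - X'_s\|_{\dbC} \le \sup_{t\le r\le s}|X(r)-X'(r)|$, so the delay does not inflate the Lipschitz constant of $\Phi$; then the weighted norm $\sup_{s\in[t,T]} e^{-\beta s}\,\dbE^{\sF_t}[\sup_{r\in[t,s]}|X(r)|^{2q}]$ with $\beta$ large enough turns $\Phi$ into a strict contraction, whose unique fixed point is $X^{t,\Upsilon;v}$.

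For the two estimates I would write the SDDE in integral form, apply Hölder to the drift integral and BDG to the diffusion integral, take the conditional expectation $\dbE^{\sF_t}$, and use (A2) together with compactness of $U$ and continuity of $b,\sigma$ in $t$ to bound the integrands by $C(1+\|\gamma\|_{\dbC}+|v|)$. The splitting $\|X_r\|_{\dbC}^{2q}\le 2^{2q-1}\bigl(\|\Upsilon_t\|_{\dbC}^{2q}+\sup_{u\in[t,r]}|X(u)|^{2q}\bigr)$ followed by Gronwall's lemma yields the first bound. The stability bound follows by the same procedure applied to the difference $X^{t,\Upsilon;v}-X^{t,\Upsilon';v'}$: its values on $[t-\delta,t]$ equal $\Upsilon_t-\Upsilon'_t$, which generates the term $\|\Upsilon_t-\Upsilon'_t\|_{\dbC}^{2q}$, while (A2) cleanly separates the path and the control contributions on the right-hand side.

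The main technical point, and the only genuine obstacle beyond \cite{Marco-2010}, is the consistent handling of the random initial segment together with the conditional expectation: one has to verify that every Picard iterate lies in $S^{2q}_{\dbF}(t,T;\dbR^n)$ despite the random $\dbC_t$-valued initial datum, and that $\dbE^{\sF_t}$-versions of BDG and Gronwall are available in the estimates. Both are routine once one observes that the $\sF_t$-measurability of $\Upsilon_t$ together with the $\dbF$-adaptedness of $v$ keep every intermediate process $\dbF$-adapted; no new difficulty arises from the delay structure itself, which is precisely why this extension is non-essential.
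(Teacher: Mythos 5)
The paper gives no proof of this proposition at all: it is stated as following ``by a standard argument'' as a non-essential extension of Theorem 2.3 in Fuhrman--Masiero--Tessitore, and your Picard-iteration/BDG/Gronwall sketch is precisely that standard argument, so your route matches what the paper intends. The one point to tighten is that $\sup_{s\in[t,T]}e^{-\beta s}\,\dbE^{\sF_t}\big[\sup_{r\in[t,s]}|X(r)|^{2q}\big]$ is a random variable rather than a norm, so the contraction should be run under the unconditional expectation (for bounded, e.g.\ truncated, $\Upsilon$, since $\Upsilon\in L^{2}_{\dbF}(\Om;\dbC_t)$ need not have finite $2q$-th moments), with the conditional estimates then recovered by testing against $1_A$ for arbitrary $A\in\sF_t$.
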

With $X^{t,\Upsilon;v}(\cd)$ being the solution of the state equation \rf{state-equation-1}, we consider the following type of BSDE, for $s\in[t,T]$,
\begin{equation}\label{BSDE-2}
 Y^{t,\Upsilon;v}(s)=\Phi(X_T^{t,\Upsilon;v})+\int_{s}^{T} g(r,X_r^{t,\Upsilon;v},Y^{t,\Upsilon;v}(r),Z^{t,\Upsilon;v}(r),v(r))dr
 -\int_{s}^{T} Z^{t,\Upsilon;v}(r)dW(r).
\end{equation}
For the generator $(\Phi, g)$ of BSDE \rf{BSDE-2}, we give the following assumptions.
\begin{itemize}
  \item [(A3)] The generator $\Phi(\g):\dbC \rightarrow \mathbb{R}$ and $g(t,\g,y,z,v):[0, T] \times \dbC \times \mathbb{R}\times \mathbb{R}^{d} \times U \rightarrow \mathbb{R}$ are joint measurable, and $g(t, \g, y, z, v)$ is continuous with respect to $(t,y,v)$.
  \item [(A4)] There is a positive constant $\tilde L$ such that for any $t\in[0, T],\g,\bar\g\in\dbC,y\in\mathbb{R},z,\bar z\in\mathbb{R}^{d}, v\in U$,
      $$
      \left|\Phi(\g)-\Phi\left(\bar \g\right)\right|
      +\left|g(t,\g, y, z, v)-g\left(t, \bar\g, y, \bar z, v\right)\right|
      \les \tilde L\left(\left\|\g-\bar\g\right\|_\dbC+\left|z-\bar z\right|\right).$$
  \item [(A5)] There is a constant $\mu\in\dbR$ such that for any $t\in[0, T],\g\in\dbC,y,\bar y\in\mathbb{R},z\in\mathbb{R}^{d}, v\in U$,
      %
      $$(y-\bar y)[g(t,\g, y, z, v)-g(t, \g,\bar y, z, v)]\les \mu|y-\bar y|^2.$$
  \item [(A6)] For a given $p\ges1$, there is a constant $M\ges0$ such that for any $t\in[0, T],\g\in\dbC,y\in\mathbb{R},z\in\mathbb{R}^{d}, v\in U$,
      $$\left|g(t,\g, y, z, v)-g\left(t,\g,0, z, v\right)\right|\les M(1+|y|^p).$$
\end{itemize}
Note that (A3) implies that $g(t,0,0,0,v)$ is bounded for all $t\in[0,T]$ and $v\in U$. Moreover, under the condition (A1)-(A6), by a similar method to Pardoux \cite{Pardoux 1999}, we can prove that BSDE
\rf{BSDE-2} admits a unique adapted solution. Here, we conclude the existence, uniqueness, and some useful estimates to the solution of BSDE \rf{BSDE-2}.

\begin{prop}\label{9.20.2}\sl
Assume (A1)-(A6) and $q \ges 1$. Then for any $t \in[0, T],$ $v \in \mathcal{U}$, $\Upsilon\in L^{2}_{\dbF}(\Om;\dbC_t)$, BSDE \rf{BSDE-2} admits a unique strong solution
$(Y^{t, \Upsilon ; v}(\cdot),Z^{t,\Upsilon;v}(\cdot))\in S_{\dbF}^{2 q}(t, T ; \mathbb{R})\ts M_{\dbF}^2(t,T;\dbR^d)$.
Moreover, there is a constant $C\ges0$ depending only on $K, L, \tilde{L}, \mu, M$ and $T$ such that for any $s\in[t, T], v\in \mathcal{U}, \Upsilon, \Upsilon^{\prime} \in L^{2}_{\dbF}(\Om;\dbC_t)$, we have
\begin{align*}
\ds &\dbE^{\sF_t}\Big\{ \sup_{s\in[t,T]}|Y^{t,\Upsilon;v}(s)|^{2q}
+\int_t^T|Y^{t,\Upsilon;v}(s)|^{2q-2}|Z^{t,\Upsilon;v}(s)|^{2}\Big\}\\
\ns\ds\les &C\Big\{1+\|\Upsilon_t\|^{2q}_{\dbC}
+\dbE^{\sF_t}\int_t^T|g(r,0,0,0,v(r))|^{2q}dr \Big\}
\end{align*}
and
$$|Y^{t,\Upsilon;v}(t)-Y^{t,\Upsilon';v}(t)|\les C\|\Upsilon_t-\Upsilon'_t\|_{\dbC}.$$
\end{prop}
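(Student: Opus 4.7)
The plan is to reduce the delayed BSDE \rf{BSDE-2} to a non-delayed BSDE by freezing the forward trajectory $X^{t,\Upsilon;v}(\cd)$ supplied by \autoref{21.9.1.4} and regarding the driver as $\tilde g(s,y,z):=g(s,X^{t,\Upsilon;v}_s,y,z,v(s))$. Under (A3)--(A6), $\tilde g$ is continuous in $y$, uniformly Lipschitz in $z$, monotone in $y$ with constant $\m$, of polynomial growth of order $p$ in $y$, and $\tilde g(s,0,0)$ lies in $M^{2q}_\dbF(t,T;\dbR)$ since (A4) yields $|\tilde g(s,0,0)|\les \tilde L\|X^{t,\Upsilon;v}_s\|_{\dbC}+|g(s,0,0,0,v(s))|$, the first summand being $L^{2q}$-bounded by \autoref{21.9.1.4} and the second bounded by (A3) together with compactness of $U$. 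The terminal value $\Phi(X_T^{t,\Upsilon;v})$ lies in $L^{2q}(\O,\sF_T;\dbR)$ by (A4). These are precisely the hypotheses of the truncation argument of Pardoux \cite{Pardoux 1999}---approximate $\tilde g$ by Lipschitz drivers $\tilde g^N$, obtain uniform-in-$N$ bounds via the monotonicity, then pass to the limit---yielding a unique adapted pair $(Y^{t,\Upsilon;v},Z^{t,\Upsilon;v})\in S^{2q}_\dbF(t,T;\dbR)\ts M^2_\dbF(t,T;\dbR^d)$.

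For the first ($L^{2q}$) estimate I would apply It\^o's formula to $e^{\b s}|Y(s)|^{2q}$ with $\b$ sufficiently large, decomposing
\[
Yg(s,\g,Y,Z,v)=Y[g(s,\g,Y,Z,v)-g(s,\g,0,Z,v)]+Y[g(s,\g,0,Z,v)-g(s,\g,0,0,v)]+Y[g(s,\g,0,0,v)-g(s,0,0,0,v)]+Yg(s,0,0,0,v),
\]
where (A5) bounds the first bracket by $\m Y^2$ and (A4) bounds the next two by $\tilde L|Y||Z|$ and $\tilde L|Y|\|\g\|_{\dbC}$---crucially, the polynomial growth (A6) is not needed at this step because the monotonicity absorbs the non-Lipschitz $y$-dependence. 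Young's inequality absorbs the cross term $|Y|^{2q-1}|Z|$ into the It\^o correction $q(2q-1)|Y|^{2q-2}|Z|^2$, and the remaining right-hand side is dominated by $|Y|^{2q}+\|\g\|^{2q}_{\dbC}+|g(s,0,0,0,v)|^{2q}$. Taking $\dbE^{\sF_t}$, treating the stochastic integral with the Burkholder--Davis--Gundy inequality, inserting the $\|X^{t,\Upsilon;v}_s\|^{2q}_{\dbC}$ bound from \autoref{21.9.1.4}, and applying Gronwall close the first inequality.

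For the stability estimate, set $\D Y=Y^{t,\Upsilon;v}-Y^{t,\Upsilon';v}$, and analogously $\D Z,\D X$, and apply It\^o's formula to $|\D Y|^2$. A direct linearisation of $g$ in $y$ is unavailable because of the non-Lipschitzness, but one can split
\[
g(s,X_s,Y,Z,v)-g(s,X'_s,Y',Z',v)=[g(s,X_s,Y,Z,v)-g(s,X_s,Y',Z,v)]+[g(s,X_s,Y',Z,v)-g(s,X_s,Y',Z',v)]+[g(s,X_s,Y',Z',v)-g(s,X'_s,Y',Z',v)],
\]
so that multiplying by $\D Y$ and using (A5) on the first bracket gives $\les\m|\D Y|^2$ without any non-Lipschitz difficulty, and (A4) on the remaining two gives $\les\tilde L|\D Y|(|\D Z|+\|\D X_s\|_{\dbC})$. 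Young absorbs $|\D Z|$ into the It\^o correction; then taking $\dbE^{\sF_t}$, bounding $\dbE^{\sF_t}\|\D X_s\|^2_\dbC\les C\|\Upsilon_t-\Upsilon'_t\|^2_\dbC$ via \autoref{21.9.1.4} (applied with identical controls), and running Gronwall yield $\dbE^{\sF_t}|\D Y(s)|^2\les C\|\Upsilon_t-\Upsilon'_t\|^2_\dbC$ uniformly in $s\in[t,T]$; at $s=t$ the random variable $\D Y(t)$ is $\sF_t$-measurable, upgrading the bound to the claimed pathwise inequality. The main obstacle I expect is the approximation/truncation step of the existence argument: one must construct Lipschitz drivers $\tilde g^N$ preserving both the monotonicity (A5) and the polynomial growth (A6) uniformly in $N$ and then pass to the limit using the It\^o-based a priori bound above; once this Pardoux-style construction is in place, the remaining estimates follow by the routine monotonicity-plus-Young-plus-Gronwall pattern just sketched.
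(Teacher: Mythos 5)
Your proposal is correct and follows essentially the route the paper itself intends: the paper gives no written proof of this proposition, stating only that existence and uniqueness follow ``by a similar method to Pardoux \cite{Pardoux 1999}'', and your reduction to a frozen-trajectory monotone BSDE, the truncation-based existence argument, and the It\^o--Young--Gronwall estimates using (A5) for the $y$-increments and (A4) for the $z$- and path-increments are exactly the standard machinery being invoked (the same splitting appears in the paper's own appendix computation for \autoref{9.20.1}). The only detail worth adding explicitly is the terminal-value difference $|\Phi(X_T^{t,\Upsilon;v})-\Phi(X_T^{t,\Upsilon';v})|\les\tilde L\|X_T^{t,\Upsilon;v}-X_T^{t,\Upsilon';v}\|_{\dbC}$ in the stability estimate, which your sketch uses implicitly.
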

In the theory of the dynamic programming principle, the comparison theorem of BSDE plays a key role, without the exception of our circumstances that
the generator of BSDE is continuous and monotonic rather than the Lipschitz continuous with respect to its items. The following comparison theorem is an immediate consequence from Fan--Jiang \cite{Fan-Jiang-12} which is applicable under our circumstances.
\begin{prop}\label{9.24.2} \sl
Assume that $(\Phi, g)$ and $\left(\Phi^{\prime}, g^{\prime}\right)$ are two pair of generators of BSDE \rf{BSDE-2} with respect to the solutions $(Y, Z)$ and $\left(Y^{\prime}, Z^{\prime}\right)$, respectively.
Assume that for all $t\in[0, T],\g\in\dbC,
v\in U$,
$\Phi(\g) \les \Phi^{\prime}(\g)$
and
$$g(t,\g,Y^{\prime}(t),Z^{\prime}(t),v)\les g'(t,\g,Y^{\prime}(t),Z^{\prime}(t),v)\ \ {\rm (or}\ g(t,\g,Y(t),Z(t),v)\les g'(t,\g,Y(t),Z(t),v){\rm)}\ \ \  \hb{a.s.}$$
 %
%
%
%
 Then $Y(t)\les Y'(t)$ a.s., for any $t \in[0, T]$.
\end{prop}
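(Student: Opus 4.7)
The plan is to prove $Y(t) \les Y'(t)$ by applying a generalized It\^o formula to the squared positive part of $\hat Y := Y - Y'$ and showing its expectation vanishes. Setting $\hat Z := Z - Z'$, the pair $(\hat Y, \hat Z)$ satisfies a BSDE whose terminal condition $\Phi - \Phi'$ is a.s.\ non-positive and whose driver is $g(s,\g,Y(s),Z(s),v(s)) - g'(s,\g,Y'(s),Z'(s),v(s))$. Because $x \mapsto (x^+)^2$ is $C^1$ with a.e.\ second derivative $2\mathbf 1_{\{x>0\}}$, the It\^o--Tanaka formula produces no local-time contribution, yielding
\begin{align*}
[(\hat Y(t))^+]^2 + \int_t^T \mathbf 1_{\{\hat Y(s)>0\}}|\hat Z(s)|^2 ds &= \int_t^T 2(\hat Y(s))^+ \big[g(s,\g,Y,Z,v) - g'(s,\g,Y',Z',v)\big] ds \\
&\quad - \int_t^T 2(\hat Y(s))^+ \hat Z(s) dW(s),
\end{align*}
after noting $[(\hat Y(T))^+]^2 = [(\Phi - \Phi')^+]^2 = 0$ a.s.

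Next I would telescope the driver difference into three pieces:
\begin{align*}
g(s,\g,Y,Z,v) - g'(s,\g,Y',Z',v) &= [g(s,\g,Y,Z,v) - g(s,\g,Y',Z,v)] \\
&\quad + [g(s,\g,Y',Z,v) - g(s,\g,Y',Z',v)] \\
&\quad + [g(s,\g,Y',Z',v) - g'(s,\g,Y',Z',v)].
\end{align*}
On $\{\hat Y(s) > 0\}$, the monotonicity assumption (A5) controls the first bracket by $2(\hat Y(s))^+[g(s,\g,Y,Z,v) - g(s,\g,Y',Z,v)] \les 2\mu[(\hat Y(s))^+]^2$; the $z$-Lipschitz condition (A4) together with Young's inequality bounds the second bracket by $\tilde L^2 [(\hat Y(s))^+]^2 + \mathbf 1_{\{\hat Y(s)>0\}}|\hat Z(s)|^2$, so the $|\hat Z|^2$ term cancels exactly against the corresponding term on the left; and the third bracket is non-positive by the standing hypothesis. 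Taking expectations (after localizing the Brownian integral) and invoking Gronwall's lemma forces $\dbE[(\hat Y(t))^+]^2 = 0$ for every $t$, yielding $Y(t) \les Y'(t)$ a.s. The alternate hypothesis, evaluated at $(Y,Z)$ rather than $(Y',Z')$, is treated identically by telescoping through $(Y,Z)$.

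The main obstacle, and what distinguishes this from the classical Lipschitz comparison theorem, is the non-Lipschitz dependence of $g$ on $y$ permitted by (A6): one cannot in general factor $g(s,\g,Y,Z,v) - g(s,\g,Y',Z,v) = \alpha(s)\hat Y(s)$ with $\alpha$ bounded, so the usual Girsanov-type linearization to a linear BSDE is unavailable. The It\^o--Tanaka route circumvents this because the monotonicity (A5) delivers an inequality directly on the quadratic $[(\hat Y)^+]^2$, which is exactly the form Gronwall can digest. A minor technical point is that the stochastic integral $\int_t^T 2(\hat Y)^+ \hat Z \, dW$ need not be a true martingale a priori; since \autoref{9.20.2} guarantees $(Y,Z),(Y',Z') \in S^{2q}_\dbF \ts M^2_\dbF$, standard localization by stopping times $\tau_n = \inf\{s \ges t : \int_t^s |\hat Y(r)^+ \hat Z(r)|^2 dr \ges n\} \wedge T$ followed by dominated convergence handles this.
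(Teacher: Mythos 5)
Your proof is correct, but it is worth noting that the paper does not actually prove \autoref{9.24.2} at all: it is stated as an immediate consequence of the generalized comparison theorem of Fan--Jiang \cite{Fan-Jiang-12}, whose hypotheses (a ``weak'' monotonicity controlled by a concave modulus) are strictly more general than (A4)--(A5). What you supply instead is the standard self-contained argument that works under the paper's actual structural assumptions: It\^o--Tanaka applied to $[(Y-Y')^+]^2$, the three-term telescoping of the driver through $(Y',Z')$, monotonicity (A5) to absorb the $y$-increment, the $z$-Lipschitz bound plus Young to cancel $\mathbf 1_{\{Y-Y'>0\}}|Z-Z'|^2$ against the bracket term, the sign hypothesis for the third piece, and backward Gronwall. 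All steps check out: on $\{Y-Y'>0\}$ one has $(Y-Y')^+=Y-Y'$ so (A5) gives exactly $2\mu[(Y-Y')^+]^2$, the indicator is correctly retained in the Young step so the cancellation is exact, the integrability of the stochastic integral follows from $S^{2q}_{\dbF}\times M^2_{\dbF}$ regularity as you say, and the symmetric hypothesis is handled by telescoping through $(Y,Z)$ (at the price of imposing (A4)--(A5) on $g'$ rather than $g$, which is implicit in both generators being ``generators of BSDE \rf{BSDE-2}''). One cosmetic imprecision: the local time is not absent from the Meyer--It\^o formula for $x\mapsto(x^+)^2$; rather, since the second-derivative measure $2\mathbf 1_{\{x>0\}}dx$ is absolutely continuous, the occupation-time formula converts the local-time integral into $\int_t^T \mathbf 1_{\{Y(s)-Y'(s)>0\}}|Z(s)-Z'(s)|^2\,ds$, which is exactly the term you wrote. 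Your route buys a proof that is elementary and verifiable from (A3)--(A6) alone; the paper's citation buys brevity and covers generators beyond the one-sided-Lipschitz class, which is not needed here.
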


\section{Main results}\label{Mainresults}

In this section, we present the main results, i.e., the dynamic programming principle and the connection between the value function and the viscosity solution of the associated Hamilton-Jacobi-Bellman equation.

\subsection{Dynamic programming principle}\label{Sec3}

In this subsection, we  show the dynamic programming principle of the stochastic recursive control problem, where the generator $g$ with delay is not necessarily Lipschitz but monotonic and continuous. First, we introduce a family of backward semigroups which was put forward by Peng \cite{Peng-97}.

\ms

For any $t\in[0,T], s\in(t,T],\g\in\dbC, v\in \cU$, and $\xi\in L^{2}(\O,\sF_s;\dbR)$, we define
$$
G_{\theta,s}^{t, \g ; v}[\xi] \triangleq \widetilde{Y}^{t, \g ; v}(\theta), \quad \theta \in[t, s],
$$
where $(\widetilde {Y}^{t, \g ; v}(\cd), \widetilde {Z}^{t, \g ; v}(\cd)) \in S_{\dbF}^{2 }(t, s; \mathbb{R})\ts M_{\dbF}^2(t,s;\dbR^d)$ is the solution of the following BSDE over the time interval $[t,s]$:
$$
\widetilde {Y}^{t, \g ; v}(\theta)=\xi
+\int_{\theta}^{s} f\big(r, X_{r}^{t,\g;v},\widetilde {Y}^{t,\g;v}(r),\widetilde{Z}^{t,\g;v}(r), v(r)\big)dr
-\int_{\theta}^{s} \widetilde {Z}^{t, \g ; v}(r) d W(r),
$$
and $X_{r}^{t, \g ; v}$ is the solution of the state equation \rf{state-equation} representing the segment of the path of the process $X^{t, x ; v}(\cd)$ from $r-\delta$ up to time $r$.
By the uniqueness of BSDE in \autoref{9.20.2}, for $\xi=Y^{t, \g ; v}(s)$, where $Y^{t, \g ; v}(\cd)$ is the solution of BSDE \rf{BSDE}, we have
$$
G_{t, T}^{t, \g ; v}[\Phi(X_{T}^{t, \g ; v})]
=G_{t, s}^{t, \g ; v}[Y^{t, \g ; v}(s)].
$$
On the other hand, coming back to the control system \rf{state-equation}-\rf{cost-functional}, we have that the related value function, which represents the control problem to maximize the cost functional, is defined by
\begin{equation}\label{value-function}
u(t,\g)\deq\esssup_{v\in\mathcal{U}}J(t,\g;v), \quad(t,\g)\in[0,T]\times\dbC.
\end{equation}
Similar to the classical situation, the value function $u(t,\g)$ defined in \rf{value-function} is a deterministic function in the setting of non-Lipschitz generator.

\begin{lem}\label{9.20.1} \sl
Assume (A1)-(A6). Then the value function $u(t,\g)$ defined in \rf{value-function} is a deterministic function.
\end{lem}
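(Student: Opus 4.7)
The plan is to realise $u(t,\g)$ as a supremum of deterministic quantities indexed by a sub-class of admissible controls independent of $\sF_t$, following the classical Peng--Buckdahn--Li strategy adapted to the path-dependent, non-Lipschitz setting.

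First, I introduce the sub-filtration $\mathbb{F}^{t}=(\mathcal{F}^{t}_{s})_{s\in[t,T]}$ generated by the future increments of the Brownian motion, namely $\mathcal{F}^{t}_{s}=\sigma(W(r)-W(t):t\les r\les s)\vee\mathcal{N}$, together with the sub-class
\[
\mathcal{U}^{t}\deq\{v\in\mathcal{U}:v \hb{ is }\mathbb{F}^{t}\hb{-adapted on }[t,T]\}.
\]
Since $\mathbb{F}^{t}$ is independent of $\sF_{t}$ and $\g\in\dbC_{t}$ is non-random, the Picard iterates used in \autoref{21.9.1.4}, built pathwise from $\g$ and $W(\cd)-W(t)$, are $\mathbb{F}^{t}$-adapted, and hence so is their limit $X^{t,\g;v}(\cd)$ for every $v\in\mathcal{U}^{t}$. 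Applying the same remark to the monotonic-coefficient fixed point used in the proof of \autoref{9.20.2} yields that $(Y^{t,\g;v},Z^{t,\g;v})$ is also $\mathbb{F}^{t}$-adapted. Consequently $J(t,\g;v)=Y^{t,\g;v}(t)$ is $\mathcal{F}^{t}_{t}$-measurable, and since $\mathcal{F}^{t}_{t}$ is $\mathbb{P}$-trivial, $J(t,\g;v)$ is a.s.\ a constant.

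Second, I establish
\[
\esssup_{v\in\mathcal{U}}J(t,\g;v)=\sup_{v\in\mathcal{U}^{t}}J(t,\g;v) \q \hb{a.s.}
\]
The inequality $\ges$ is immediate from $\mathcal{U}^{t}\subset\mathcal{U}$. For the reverse, fix $v\in\mathcal{U}$. Via a regular conditional-probability disintegration, for $\mathbb{P}$-a.e.\ $\omega_{0}\in\Omega$ define the past-frozen control
\[
v^{\omega_{0}}(s,\omega)\deq v\bigl(s,\,\omega_{0}\otimes_{t}\omega\bigr),\q s\in[t,T],
\]
where $\omega_{0}\otimes_{t}\omega$ stands for the path agreeing with $\omega_{0}$ on $[0,t]$ and with $\omega$ on $[t,T]$. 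Then $v^{\omega_{0}}\in\mathcal{U}^{t}$, and by pathwise uniqueness of the forward SDE and BSDE granted by \autoref{21.9.1.4}--\autoref{9.20.2}, together with the deterministic initial datum $\g$, one obtains
\[
J(t,\g;v)(\omega_{0})=J(t,\g;v^{\omega_{0}})\les\sup_{v'\in\mathcal{U}^{t}}J(t,\g;v').
\]
Taking essential supremum over $v\in\mathcal{U}$ on the left proves the claim. Combining the two steps, $u(t,\g)=\sup_{v\in\mathcal{U}^{t}}J(t,\g;v)$ is a supremum of constants, hence deterministic.

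The main obstacle is rigorously justifying the pathwise identity $J(t,\g;v)(\omega_{0})=J(t,\g;v^{\omega_{0}})$ in the freezing step. This must be done by first transferring it to the forward segment $X_{r}^{t,\g;v}$ via pathwise uniqueness of strong solutions, and then to the BSDE component by invoking uniqueness under the monotonicity assumption (A5) and polynomial growth assumption (A6); these are precisely the ingredients secured by \autoref{9.20.2}, and they are stable under the freezing operation because the generator's dependence on $v$ enters only through the driver, not through the measurability structure. Hence the absence of Lipschitz continuity in $y$ and $v$ does not obstruct the argument.
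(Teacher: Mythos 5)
Your argument reaches the right conclusion but by a genuinely different route from the paper. The paper follows Peng's two-step reduction: it first shows $\esssup_{v\in\cU}J(t,\g;v)=\esssup_{v\in\overline{\cU}^t}J(t,\g;v)$, where $\overline{\cU}^t$ is the class of controls of the form $\sum_j 1_{A_j}v^j$ with $v^j\in\cU^t$ and $\{A_j\}$ a partition of $(\Om,\sF_t)$, by exploiting the density of $\overline{\cU}^t$ in $\cU$ together with an $L^2$-stability estimate for $Y^{t,\g;v}(t)$ in the control (obtained via It\^o's formula, the monotonicity condition (A5), and dominated convergence in the $v$-argument of $g$); it then passes from $\overline{\cU}^t$ to $\cU^t$ by the splitting identity $J(t,\g;\sum_j 1_{A_j}v^j)=\sum_j 1_{A_j}J(t,\g;v^j)$ and selecting the largest deterministic value $J(t,\g;v^1)$. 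You instead bypass the step-control class entirely and reduce a general $v\in\cU$ to $\cU^t$ by freezing the past through a regular conditional probability, in the spirit of Buckdahn--Li. Each approach buys something: yours avoids the continuity-of-$J$-in-the-control estimate (so the continuity of $g$ in $v$ and the monotonicity in $y$ are used only for well-posedness, making the argument potentially robust to merely measurable $v$-dependence), while the paper's density argument avoids all measure-theoretic disintegration and localizes the non-Lipschitz difficulty in a single a priori estimate. The one soft spot in your write-up is the pathwise identity $J(t,\g;v)(\o_0)=J(t,\g;v^{\o_0})$, which you correctly flag as the crux but essentially assert: making it rigorous requires choosing a jointly measurable version of $v$, verifying that under the r.c.p.d. $\dbP^{\o_0}$ the shifted Brownian motion is again a Brownian motion generating $\dbF^t$, and identifying the solution of the SDDE/BSDE under $\dbP^{\o_0}$ with the restriction of the global solution via uniqueness for a.e.\ $\o_0$ simultaneously; this is standard but is a nontrivial block of work that the paper's route does not need.
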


For clear presentation, the proof of the above lemma together with the proofs of the following lemmas are presented in the appendix (see Section \ref{Auxiliary}).

\begin{lem}\label{9.23.2} \sl
For any $t\in[0,T]$ and $\g,\bar\g\in\dbC$,
\begin{equation}\label{9.21.2}
|u(t,\g)|\les C\big( 1+\|\g\|_{\dbC}\big)
\end{equation}
and
\begin{equation}\label{9.21.1}
|u(t,\g)-u(t,\bar\g)|\les \tilde{C}\|\g-\bar\g\|_\dbC,
\end{equation}
where $C\ges0$ is the constant in \autoref{9.20.2}.
\end{lem}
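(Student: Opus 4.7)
The plan is to derive both estimates by specializing the BSDE moment bounds of \autoref{9.20.2} to the initial time $s=t$, then passing to the supremum over $v\in\cU$, using \autoref{9.20.1} to guarantee that this supremum is deterministic. No new machinery beyond these two propositions and the compactness of $U$ should be required.

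First I would fix $t\in[0,T]$ and a deterministic $\g\in\dbC_t$, viewed as an element of $L^{2}_{\dbF}(\O;\dbC_t)$, and take an arbitrary $v\in\cU$. The first inequality of \autoref{9.20.2} with $q=1$, evaluated at $s=t$ (where $Y^{t,\g;v}(t)$ is $\sF_t$-measurable and, by the determinism of the initial path, in fact a constant) yields
\[
|J(t,\g;v)|^2=|Y^{t,\g;v}(t)|^2\les C\Big(1+\|\g\|_\dbC^2+\dbE\int_t^T|g(r,0,0,0,v(r))|^2dr\Big).
\]
Assumption (A3) makes $g(r,0,0,0,\cd)$ continuous in $v\in U$, and since $U$ is compact by the definition \rf{22.1.21.1} of $\cU$, the integrand is uniformly bounded on $[0,T]\times U$. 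Hence $|J(t,\g;v)|\les C'(1+\|\g\|_\dbC)$ with $C'$ independent of $v$; taking $\sup_{v\in\cU}$ and invoking \autoref{9.20.1} to identify this supremum with the deterministic function $u(t,\g)$ gives \rf{9.21.2}.

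For \rf{9.21.1}, the second estimate of \autoref{9.20.2} applied with the same $v$ but two deterministic initial paths $\g,\bar\g\in\dbC_t$ gives
\[
|J(t,\g;v)-J(t,\bar\g;v)|=|Y^{t,\g;v}(t)-Y^{t,\bar\g;v}(t)|\les \tilde C\|\g-\bar\g\|_\dbC,
\]
and the right-hand side is independent of $v\in\cU$. The elementary inequality $|\sup_va_v-\sup_vb_v|\les\sup_v|a_v-b_v|$, applied to the two essential suprema (which are deterministic by \autoref{9.20.1}), then yields \rf{9.21.1} at once.

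The argument is mostly bookkeeping on top of the BSDE estimates. The only slightly delicate point is observing that a deterministic $\g\in\dbC_t$ embeds into the random initial-data space $L^{2}_{\dbF}(\O;\dbC_t)$ of \autoref{9.20.2}, so that $\dbE^{\sF_t}$ collapses to $\dbE$ and $|Y^{t,\g;v}(t)|^2$ inherits a pointwise bound rather than merely a conditional one. Uniform control of the driver term $g(r,0,0,0,v(r))$, and thus the $v$-independence of the constants, hinges on the compactness of $U$ written into \rf{22.1.21.1}; without that, taking the supremum over $v$ could destroy the linear growth bound.
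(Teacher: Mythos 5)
Your proposal is correct and follows essentially the same route as the paper: both derive the two inequalities from the a priori estimates of \autoref{9.20.2} applied to $J(t,\g;v)=Y^{t,\g;v}(t)$, absorb the driver term $g(r,0,0,0,v(r))$ into the constant via its boundedness (noted in the paper right after (A6)), and then pass to the supremum over $v$. The only cosmetic difference is that the paper spells out the comparison of suprema by choosing $\e$-optimal controls, whereas you invoke the elementary inequality $|\sup_v a_v-\sup_v b_v|\les\sup_v|a_v-b_v|$ directly.
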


Next, we would like to study some properties for the cost functional $J$ and the value function $u$ when the initial state is a $\dbC$-valued random variable, which will be used in the proof of the dynamic programming principle later.

\begin{lem}\label{9.23.1} \sl
	For any $t\in[0,T]$, $v\in\cU$ and $\Upsilon\in L^{2}_{\dbF}(\Om;\dbC_t)$, we have
	$$J(t,\Upsilon;v)=Y^{t,\Upsilon;v}(t).$$
\end{lem}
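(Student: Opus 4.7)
\textbf{Proof proposal for \autoref{9.23.1}.} The plan is a two-step \emph{approximation by simple random variables and pasting by uniqueness}. First I would reduce to the bounded case by truncating $\Upsilon$ (replacing it by $\Upsilon\cdot I_{\{\|\Upsilon_t\|_{\dbC}\les N\}}$ and letting $N\to\i$ at the end via the $L^{2}$-stability of \autoref{9.20.2}); the resulting bounded random initial datum then admits, by \autoref{9.22.2}, simple approximants $\Upsilon^n=\sum_{i=1}^{n}\g^{i}I_{A_i}$, with $\g^{i}\in\dbC_t$ deterministic and $\{A_i\}_{i=1}^{n}\subset\sF_t$ a partition of $\O$, such that $\Upsilon^n\to\Upsilon$ in $L^{2}_{\dbF}(\Om;\dbC_t)$.

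For a single fixed simple $\Upsilon^n$ I would establish the identity
$$
Y^{t,\Upsilon^n;v}(t)\;=\;\sum_{i=1}^{n}Y^{t,\g^i;v}(t)\,I_{A_i}\;=\;\sum_{i=1}^{n}J(t,\g^i;v)\,I_{A_i}\;=\;J(t,\Upsilon^n;v),
$$
the last equality being the evaluation of the \emph{deterministic} map $\g\mapsto J(t,\g;v)$ at the simple random variable $\Upsilon^n$. The nontrivial middle equality is handled by a direct pasting: set $\bar X(\cd)\deq\sum_{i}X^{t,\g^{i};v}(\cd)I_{A_i}$ and $(\bar Y(\cd),\bar Z(\cd))\deq\sum_{i}(Y^{t,\g^{i};v}(\cd),Z^{t,\g^{i};v}(\cd))I_{A_i}$. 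Since the $A_i$'s are disjoint and $\sF_t$-measurable, $\bar X$ and $(\bar Y,\bar Z)$ are $\dbF$-adapted, the stochastic integrals split across the $A_i$'s, and the identity $\phi(\cd,\bar X_s,\cd)=\sum_i\phi(\cd,X^{t,\g^i;v}_s,\cd)I_{A_i}$ holds for each of the coefficients $b,\si,g$ as well as for the terminal value $\Phi(\bar X_T)$. Thus $\bar X$ solves \rf{state-equation-1} with initial datum $\Upsilon^n$ and $(\bar Y,\bar Z)$ solves \rf{BSDE-2} driven by $\bar X$. The uniqueness parts of \autoref{21.9.1.4} and \autoref{9.20.2} then force $\bar X=X^{t,\Upsilon^n;v}$ and $\bar Y=Y^{t,\Upsilon^n;v}$ almost surely.

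To pass to the limit $n\to\i$ I would use the stability estimate at the running time from \autoref{9.20.2}, namely $|Y^{t,\Upsilon^n;v}(t)-Y^{t,\Upsilon;v}(t)|\les C\,\|\Upsilon^n_t-\Upsilon_t\|_{\dbC}$, squaring and taking expectations to obtain convergence in $L^{2}(\O)$. The very same inequality applied to deterministic paths $\g,\g'\in\dbC_t$ shows that $\g\mapsto J(t,\g;v)$ is $C$-Lipschitz, and hence $J(t,\Upsilon^n;v)\to J(t,\Upsilon;v)$ in $L^{2}(\O)$ as well. Matching the two limits through the identity established for every $\Upsilon^n$ yields $J(t,\Upsilon;v)=Y^{t,\Upsilon;v}(t)$ almost surely.

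The main obstacle I expect is the BSDE pasting step: although the $\sF_t$-measurability of the $I_{A_i}$ makes the $dW$-terms split cleanly, the generator $g$ is only monotonic and of polynomial growth in $y$, so one cannot invoke a naive Lipschitz contraction to pass from ``$(\bar Y,\bar Z)$ solves the pasted BSDE'' to ``$\bar Y=Y^{t,\Upsilon^n;v}$''; the uniqueness must be taken from \autoref{9.20.2}. A secondary, more cosmetic difficulty is ensuring that the initial truncation $\Upsilon\cdot I_{\{\|\Upsilon_t\|_{\dbC}\les N\}}$ remains in $L^{2}_{\dbF}(\Om;\dbC_t)$ and that the identity is preserved in the limit $N\to\i$, which is again a direct consequence of the Lipschitz bound in \autoref{9.20.2}.
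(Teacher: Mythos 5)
Your proposal is correct and follows essentially the same route as the paper: pasting over an $\sF_t$-measurable partition combined with the uniqueness assertions of \autoref{21.9.1.4} and \autoref{9.20.2} for the simple case, then the density of simple $\dbC$-valued random variables from \autoref{9.22.2} and the Lipschitz stability estimate of \autoref{9.20.2} to pass to bounded and finally general $\Upsilon$. The only (immaterial) difference is that the paper truncates by clamping the path values, $(\Psi_t(r)\wedge m)\vee(-m)$, rather than multiplying by the indicator $I_{\{\|\Upsilon_t\|_{\dbC}\les N\}}$.
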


\begin{lem}\label{9.23.6}\sl
For any $t\in[0,T]$, $v\in\cU$ and $\Upsilon\in L^{2}_{\dbF}(\Om;\dbC_t)$, we have
\begin{equation}\label{9.23.3}
u(t,\Upsilon)\ges Y^{t,\Upsilon;v}(t)\q \hb{a.s.},
\end{equation}
and for arbitrary $\e>0$, there is an admissible control $v\in\cU$ such that
\begin{equation}\label{9.23.4}
u(t,\Upsilon)\les Y^{t,\Upsilon;v}(t)+\e\q \hb{a.s.}
\end{equation}
\end{lem}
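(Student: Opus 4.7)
The plan is to prove both inequalities by first reducing to $\dbC_t$-valued simple random initial data via Proposition \ref{9.22.2}, then promoting to general $\Upsilon \in L^{2}_{\dbF}(\Om;\dbC_t)$ via the Lipschitz estimates established in Lemma \ref{9.23.2} and Proposition \ref{9.20.2}. For the first inequality $u(t,\Upsilon)\ges Y^{t,\Upsilon;v}(t)$, suppose first that $\Upsilon$ is bounded. By Proposition \ref{9.22.2} approximate $\Upsilon$ in $L^2_{\dbF}(\Om;\dbC_t)$ by simple $\Upsilon^n=\sum_{i=1}^n \g^i I_{A_i}$ with $\{A_i\}$ an $\sF_t$-partition and $\g^i\in\dbC_t$. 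By the strong uniqueness of \rf{state-equation-1} and \rf{BSDE-2} and the $\sF_t$-measurability of $I_{A_i}$, one has $X^{t,\Upsilon^n;v}(s)=\sum_i X^{t,\g^i;v}(s)I_{A_i}$ on $[t,T]$ and consequently $Y^{t,\Upsilon^n;v}(t)=\sum_i Y^{t,\g^i;v}(t)I_{A_i}$ a.s. Since the value function is deterministic by Lemma \ref{9.20.1}, also $u(t,\Upsilon^n)=\sum_i u(t,\g^i)I_{A_i}$, and the deterministic definition $u(t,\g^i)\ges J(t,\g^i;v)=Y^{t,\g^i;v}(t)$ together with summation over $i$ yields the inequality for $\Upsilon^n$. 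Passing to the limit along a subsequence converging a.s., Proposition \ref{9.20.2} gives $Y^{t,\Upsilon^n;v}(t)\to Y^{t,\Upsilon;v}(t)$ and Lemma \ref{9.23.2} gives $u(t,\Upsilon^n)\to u(t,\Upsilon)$, preserving the inequality. The unbounded case follows by truncating $\Upsilon$ through $\Upsilon^{(N)}=\Upsilon I_{\{\|\Upsilon\|_\dbC\les N\}}$ and applying dominated convergence in $L^2_{\dbF}(\Om;\dbC_t)$ together with the same continuity estimates.

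For the $\e$-optimal inequality, the strategy is a measurable partition-and-paste argument. Exploit the separability of $\dbC_t$ (continuous paths on a compact interval with sup-norm) to find a countable cover of $\dbC_t$ by open balls of radius $\eta>0$, and refine it to a disjoint Borel partition $\{O_i\}_{i\ges 1}$ with $O_i\subset B(\g^i,\eta)$ for chosen centers $\g^i\in\dbC_t$. Here $\eta$ will be selected at the end so that $(C+\tilde C)\eta<\e/2$, where $C$ and $\tilde C$ are the constants from Proposition \ref{9.20.2} and Lemma \ref{9.23.2}. For each $i$, by the definition of $u(t,\g^i)$ in \rf{value-function} as an essential supremum over $\cU$ (which in the deterministic case coincides with the supremum), there exists $v^i\in\cU$ with $u(t,\g^i)\les Y^{t,\g^i;v^i}(t)+\e/2$. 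Define the candidate control
$$v(s,\omega)\deq \sum_{i\ges 1} v^i(s,\omega)\, I_{\{\Upsilon\in O_i\}}(\omega),\qquad s\in[t,T].$$
Since $\{\Upsilon\in O_i\}\in\sF_t$ and each $v^i$ is $\dbF$-progressive and $U$-valued, so is $v$, hence $v\in\cU$.

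On each event $\{\Upsilon\in O_i\}$ we have $\|\Upsilon-\g^i\|_\dbC<\eta$, so combining Lemma \ref{9.23.2}, the choice of $v^i$, and Proposition \ref{9.20.2} (applied with control $v^i=v$ on this event and initial data $\Upsilon$ versus $\g^i$),
\begin{align*}
u(t,\Upsilon) I_{\{\Upsilon\in O_i\}} &\les \bigl(u(t,\g^i)+\tilde C\eta\bigr)I_{\{\Upsilon\in O_i\}}\\
&\les \bigl(Y^{t,\g^i;v^i}(t)+\e/2+\tilde C\eta\bigr)I_{\{\Upsilon\in O_i\}}\\
&\les \bigl(Y^{t,\Upsilon;v}(t)+\e/2+(C+\tilde C)\eta\bigr)I_{\{\Upsilon\in O_i\}}.
\end{align*}
Summing over $i$ and choosing $\eta$ so that $(C+\tilde C)\eta<\e/2$ yields $u(t,\Upsilon)\les Y^{t,\Upsilon;v}(t)+\e$ a.s.

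The main obstacle is the second inequality: constructing a single admissible control $v\in\cU$ that is simultaneously near-optimal across the whole probability space. The delicate points are (i) the infinite-dimensional nature of the initial state space $\dbC_t$, which forces us to rely on its separability to obtain a countable Borel partition by small cells, and (ii) the need for uniform Lipschitz control of both the value function (Lemma \ref{9.23.2}) and the BSDE solution in initial data (Proposition \ref{9.20.2}); these estimates, which hold under the non-Lipschitz but monotonic assumption (A5) on $g$, are precisely what makes the partition-and-paste argument go through despite the absence of a Lipschitz condition in $y$.
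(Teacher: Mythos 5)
Your proof of \rf{9.23.3} follows essentially the same path as the paper (establish the identity $Y^{t,\Upsilon;v}(t)=\sum_i 1_{A_i}Y^{t,\g^i;v}(t)$ for simple initial data via uniqueness, then pass to the limit using \autoref{9.20.2} and \autoref{9.23.2}), so there is nothing to add there. For \rf{9.23.4} you take a genuinely different route: the paper approximates the random initial path $\Upsilon$ by \emph{simple} $\dbC_t$-valued random variables through \autoref{9.22.2} (after a truncation step, combined by a diagonal argument) and pastes $\e/3$-optimal controls over the resulting finite $\sF_t$-partition, whereas you partition the separable space $\dbC_t$ itself into countably many Borel cells of radius $\eta$ and paste $\e/2$-optimal controls for the cell centers over the induced countable $\sF_t$-partition $\{\Upsilon\in O_i\}$. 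Your version buys pathwise control: on each cell the errors are bounded a.s.\ by $(C+\tilde C)\eta$ via the Lipschitz estimates, which sidesteps the delicate passage from $L^2$-closeness of $\Upsilon^m$ to $\Upsilon$ to an almost-sure bound that the paper's Step 2 needs; the price is that you must handle a countably infinite partition. Two steps in your argument are asserted rather than proved and should be made explicit. First, the existence of $v^i\in\cU$ with $u(t,\g^i)\les Y^{t,\g^i;v^i}(t)+\e/2$ \emph{almost surely} is not immediate from the definition of the essential supremum over all of $\cU$ (where $J(t,\g^i;v)$ is a genuine random variable); one needs the reduction $\esssup_{v\in\cU}J=\esssup_{v\in\cU^t}J$ from the proof of \autoref{9.20.1}, which makes the candidates deterministic and turns the esssup into an ordinary supremum --- this is exactly how the paper selects its $v_i\in\cU^t$. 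Second, the identity $Y^{t,\Upsilon;v}(t)=Y^{t,\Upsilon;v^i}(t)$ on $\{\Upsilon\in O_i\}$ (which you fold into your appeal to \autoref{9.20.2}, whose estimate only compares different initial data under the \emph{same} control) is a locality property of the SDE/BSDE over an $\sF_t$-partition; it is true, but requires the multiply-by-$1_{A_i}$-and-use-uniqueness argument (Lemma 3.1 of Chen--Wu, extended to countable partitions), and also a check that the countable sum $v=\sum_{i\ges1}v^i 1_{\{\Upsilon\in O_i\}}$ is admissible (immediate here since $U$ is compact). With those two points filled in, your argument is correct and, in my view, somewhat more robust than the paper's.
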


Based on the above preparations, we can prove the first main result, i.e., the dynamic programming principle.

\begin{thm}[{\bf Generalized dynamic programming principle}]\label{GDDP} \sl
Assume (A1)-(A6). Then the value function $u(t,\g)$ defined in \rf{value-function} for our optimal control problem system obeys dynamic programming principle, i.e., for each $0\les \t\les T-t$,
\begin{equation}\label{9.24.1}
u(t,\g)=\esssup_{v\in\mathcal{U}}G_{t,t+\t}^{t,\g;v}[u(t+\t,X_{t+\t}^{t,\g;v})].
\end{equation}
\end{thm}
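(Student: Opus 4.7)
Write $W(t,\g):=\esssup_{v\in\cU}G^{t,\g;v}_{t,t+\t}[u(t+\t,X^{t,\g;v}_{t+\t})]$ for the right-hand side of \rf{9.24.1}. The plan is to prove $u(t,\g)\les W(t,\g)$ and $u(t,\g)\ges W(t,\g)$ separately, relying throughout on the semigroup identity $Y^{t,\g;v}(t)=G^{t,\g;v}_{t,t+\t}[Y^{t,\g;v}(t+\t)]$ and the flow property $Y^{t,\g;v}(s)=Y^{t+\t,X^{t,\g;v}_{t+\t};v}(s)$ for $s\in[t+\t,T]$ (both consequences of uniqueness in \autoref{9.20.2}), together with the comparison theorem \autoref{9.24.2} and the random-initial-datum characterization in \autoref{9.23.6}.

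\textbf{Upper bound.} For $v\in\cU$, the flow property and \autoref{9.23.1} give $Y^{t,\g;v}(t+\t)=J(t+\t,X^{t,\g;v}_{t+\t};v)$, and \rf{9.23.3} upgrades this to $Y^{t,\g;v}(t+\t)\les u(t+\t,X^{t,\g;v}_{t+\t})$ a.s. Applying \autoref{9.24.2} to the two BSDEs on $[t,t+\t]$ that share the generator $g(\cd,X^{t,\g;v}_{\cd},\cd,\cd,v(\cd))$ but carry these two terminal data, and then invoking the semigroup identity, I get $J(t,\g;v)=Y^{t,\g;v}(t)\les G^{t,\g;v}_{t,t+\t}[u(t+\t,X^{t,\g;v}_{t+\t})]\les W(t,\g)$; taking $\esssup$ in $v$ yields $u(t,\g)\les W(t,\g)$.

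\textbf{Lower bound.} Fix $\e>0$ and $v\in\cU$. Applying \rf{9.23.4} to the $\sF_{t+\t}$-measurable initial datum $X^{t,\g;v}_{t+\t}\in L^2_{\dbF}(\O;\dbC_{t+\t})$ produces $v^\e\in\cU$ with $u(t+\t,X^{t,\g;v}_{t+\t})\les Y^{t+\t,X^{t,\g;v}_{t+\t};v^\e}(t+\t)+\e$ a.s. The concatenation $\bar v:=v\mathbf{1}_{[t,t+\t)}+v^\e\mathbf{1}_{[t+\t,T]}$ lies in $\cU$ and, because $\bar v\equiv v$ on $[t,t+\t]$ and $\bar v\equiv v^\e$ on $[t+\t,T]$, the flow property gives $Y^{t,\g;\bar v}(t+\t)=Y^{t+\t,X^{t,\g;v}_{t+\t};v^\e}(t+\t)$ while the restriction makes $G^{t,\g;\bar v}_{t,t+\t}=G^{t,\g;v}_{t,t+\t}$. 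Hence by \autoref{9.24.2} followed by the standard monotone-BSDE stability estimate (which under (A4)--(A5) provides $|G^{t,\g;v}_{t,t+\t}[\xi+\e]-G^{t,\g;v}_{t,t+\t}[\xi]|\les C\e$),
$$G^{t,\g;v}_{t,t+\t}[u(t+\t,X^{t,\g;v}_{t+\t})]\les G^{t,\g;v}_{t,t+\t}[Y^{t+\t,X^{t,\g;v}_{t+\t};v^\e}(t+\t)]+C\e=J(t,\g;\bar v)+C\e\les u(t,\g)+C\e.$$
Taking $\esssup$ in $v$ and letting $\e\downarrow 0$ concludes $W(t,\g)\les u(t,\g)$.

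\textbf{Main obstacles.} Both technical difficulties stem from $g$ being non-Lipschitz in $y$. The $C\e$-stability used above is not the textbook Lipschitz estimate; it must be proved directly from (A4)--(A6) by applying It\^o's formula to $e^{\a r}|\Delta Y(r)|^{2}$, using (A5) to absorb the $y$-difference on the drift side, a Young inequality on the Lipschitz $z$-term, and the polynomial growth (A6) only through the moment bounds already furnished by \autoref{9.20.2}. The more delicate point is the measurable selection of $v^\e$ for the $\dbC$-valued random datum $X^{t,\g;v}_{t+\t}$: one approximates it by simple profiles $\sum_i\g^i\mathbf{1}_{A_i}$ via \autoref{9.22.2}, picks a deterministic $\e$-optimal $v^{\e,i}$ for each $\g^i$ from the defining supremum of $u$, pastes them into the $\sF_{t+\t}$-measurable control $\sum_i v^{\e,i}\mathbf{1}_{A_i}$, and passes to the limit using the Lipschitz estimate \rf{9.21.1} for $u$ together with the continuity estimates in \autoref{21.9.1.4} and \autoref{9.20.2}.
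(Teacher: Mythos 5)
Your proposal is correct and follows essentially the same route as the paper: both directions rest on the semigroup/flow identities, the $\varepsilon$-optimal selection of \autoref{9.23.6} for the random datum $X^{t,\g;v}_{t+\t}$, the generalized comparison theorem \autoref{9.24.2}, and the $C\varepsilon$-stability of the backward semigroup to absorb the perturbation. The only step the paper makes explicit that you leave implicit is the verification that $u(t+\t,X^{t,\g;v}_{t+\t})$ is square-integrable (so it is an admissible terminal datum for the comparison theorem), which follows from the estimates in \autoref{21.9.1.4} and \autoref{9.20.2} that you already invoke.
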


\begin{proof}
Noticing the existence and uniqueness of BSDE \rf{BSDE}, 
we have
$$
\begin{aligned}
\ds u(t, \g) &=\esssup_{v \in \mathcal{U}} G_{t, T}^{t, \g ; v}
[\Phi(X_{T}^{t, \g ; v})] =\esssup_{v \in \mathcal{U}} G_{t, t+\t}^{t, \g ; v}
[Y^{t, \g ; v}(t+\t)]=\esssup_{v \in \mathcal{U}} G_{t, t+\t}^{t, \g ; v}
[Y^{t+\t, X_{t+\t}^{t, \g ; v} ; v}(t+\t)].
\end{aligned}
$$
Keep in mind that the generator of BSDE \rf{BSDE} is not Lipschitz with respect to $Y$, which leads to the failure of the classical comparison theorem. Fortunately, a generalized comparison theorem in \autoref{9.24.2} given by
Fan--Jiang \cite{Fan-Jiang-12} works, in which the generator of BSDE is non-Lipschitz and ``weakly" monotonic with respect to the first solution in the solution pair. However, before applying the generalized comparison theorem, we need verify the necessary terminal condition, i.e., for $s\in[t,T]$ and $v\in\cU$, $u(s,X^{t,\g,v}_s)$ is square-integrable and can act as the terminal value of BSDE. For this, note that for arbitrary $\e>0$, \autoref{9.23.6} implies that there is an admissible control $\bar v\in\cU$ such that
$$
Y^{s, X_{s}^{t, \g ; v} ; \bar{v}}(s) \les
u(s, X_{s}^{t, \g ; v}) \les
Y^{s, X_{s}^{t, \g ; v};\bar{v}}(s)+\e.
$$
Hence we only need to show $\dbE|Y^{s, X_{s}^{t, \g ; v} ; \bar{v}}(s)|^2<\i$. Actually, it follows from (A1)-(A6), \autoref{21.9.1.4} and \autoref{9.20.2}, since
$$
\begin{aligned}
\ds \dbE|Y^{s, X_{s}^{t, \g ; v} ; \bar{v}}(s)|^2
\les C_{p}\left(1+\mathbb{E}\|X_{s}^{t, \g ; v}\|^{2}_{\dbC}\right) \les C_{p}\bigg(1+\|\g\|_{\dbC}^{2}+\dbE\int_{t}^{T}\left|v(r)\right|^{2} d r\bigg)<\infty.
\end{aligned}
$$
%
Now, by \autoref{9.24.2} we have
\begin{equation}\label{9.24.3}
u(t, \g) \les \esssup_{v \in \mathcal{U}}
G_{t, t+\t}^{t, \g ; v}[u(t+\t, X_{t+\t}^{t, \g ; v})].
\end{equation}
On the other hand, \autoref{9.23.6} also implies that, for arbitrary $\varepsilon>0$, there is an admissible control $\tilde{v} \in \mathcal{U}$ such that
$$
u(t+\t, X_{t+\t}^{t, \g ; v})-\e \les Y^{t+\t, X_{t+\t}^{t, \g ; v} ; \tilde{v}}(t+\t).
$$
Hence by \autoref{9.24.2} again, we have
\begin{align}\label{9.24.4}
\ds u(t, \g) & \ges \esssup_{v \in \mathcal{U}} G_{t, t+\t}^{t, \g ; v}[Y^{t+\t, X_{t+\t}^{t, \g ; v} ; \tilde{v}}(t+\t)] \nn\\
\ns\ds & \ges \esssup_{v \in \mathcal{U}} G_{t, t+\t}^{t, \g ; v}[u(t+\t, X_{t+\t}^{t, \g ; v})-\e] \nn\\
\ns\ds & \ges \esssup_{v\in\mathcal{U}}G_{t,t+\t}^{t,\g;v}[u(t+\t,X_{t+\t}^{t, \g ; v})]-\sqrt{C_{p}}\e,
\end{align}
where the last inequality is based on the regular estimation of BSDE (see  Peng \cite{Peng-92} or Zhang \cite{Zhang-17}).

\ms

Finally, due to the arbitrariness of $\e$, \rf{9.24.1} follows from \rf{9.24.3} and \rf{9.24.4}.
\end{proof}

\subsection{Hamilton--Jacobi--Bellman equation}\label{Sec4}

In the classical case, Peng \cite{Peng-92} established that the value function is as a viscosity solution to its associated Hamilton-Jacobi-Bellman  equation. In this subsection, our objective is to establish a connection between the value function defined in equation \rf{value-function} and the viscosity solution of the HJB equation \rf{HJB-equation} in the setting of a non-Lipschitz generator with delay.

\ms

To prove it, we need an additional assumption: namely, that the generator $g$ of BSDE \rf{BSDE} is independent of the second unknown variable $Z$, otherwise, obtaining the desired result would be significantly challenging. The good news is that this scenario finds important applications in finance (as described in Section \ref{Sec5}). Specifically, in this case, the generator $g(t, \gamma, y, z, v)$ in  BSDE \rf{BSDE} is simplified to $g(t, \gamma, y, v)$, and the HJB equation is a second-order fully nonlinear parabolic partial differential equation (PDE, for short) with the form
\begin{equation}\label{HJB-equation}
\left\{\begin{split}
\ds & \frac{\partial}{\partial t}u(t,\g)+H(t,\g,u(t,\g),\nabla_\g u(t,\g),\nabla_{\g}^2u(t,\g))=0,\q~
      (t,\g)\in[0,T)\ts\dbC,\\
\ns\ds & u(T,\g)=\Phi(\g),
\end{split}\right.
\end{equation}
where the Hamiltonian $H:[0,T]\ts\dbC\ts\dbR\ts\dbR^n\ts\dbS^n\rightarrow\dbR$ is defined as below
\begin{equation}\label{12.19.1}
H(t,\g,r,p,A)\deq\sup _{v\in U}\left\{\langle p, b(t,\g,v)\rangle
+\frac{1}{2}Tr\left(\sigma(t,\g,v)\sigma^*(t,\g,v)A\right)
+g\left(t,\g,r,v\right)\right\}.
\end{equation}
In the above, $\dbS^n$ denotes the matrix space of $n\ts n$ symmetric matrices, and the gradient $\nabla_\g u$ (resp., Hessian $\nabla_\g^2 u$) represents the first (resp., second) G\^ateaux derivative of $u$ with respect to $\g\in\dbC$, which is an $n$-tuple of finite Borel measures on $[0,T]$ and the dual space of $\dbC$. See e.g. Fuhrman--Masiero--Tessitore \cite{Marco-2010}.

\ms

Before recalling the definition of the viscosity solution of HJB equations, we specify the notation $C^{1,2}_{\text{Lip}}([0,T]\ts\dbC;\dbR)$ which is the space of all continuous functions from $[0,T]\ts\dbC$ to $\dbR$ such that their partial derivatives up to order one with respect to time variable and order two with respect to state variable are globally bounded and Lipschitz continuous.

\begin{defn} \sl
Let $u:[0,T]\ts\dbC\ra\dbR$ be a continuous function.
\begin{itemize}
  \item [{\rm (a)}] We call $u$ a viscosity subsolution of HJB equation \rf{HJB-equation}, if for any $\g\in\dbC$,  we have $u(T,\g)\les \Phi(\g)$, and for any
      {\rm $\phi\in C^{1,2}_{\text{Lip}}([0,T]\ts\dbC;\dbR)$}, $(t,\g)\in[0,T]\ts\dbC$, whenever $\phi-u$ attains a global minimum at $(t,\g)$, the following holds:
      $$\frac{\partial}{\partial t}\phi(t,\g)+H(t,\g,\phi,\nabla_\g \phi,\nabla_{\g}^2\phi)\ges0.$$
  \item [{\rm (b)}] We call $u$ a viscosity supersolution of HJB equation \rf{HJB-equation}, if for any $\g\in\dbC$,  we have $u(T,\g)\ges \Phi(\g)$, and for any
      {\rm $\phi\in C^{1,2}_{\text{Lip}}([0,T]\ts\dbC;\dbR)$}, $(t,\g)\in[0,T]\ts\dbC$, whenever $\phi-u$ attains a global maximum at $(t,\g)$, the following holds:
      $$\frac{\partial}{\partial t}\phi(t,\g)+H(t,\g,\phi,\nabla_\g \phi,\nabla_{\g}^2\phi)\les0.$$
  \item [{\rm (c)}] We call $u$ a viscosity solution of HJB equation \rf{HJB-equation}, if it is both a viscosity subsolution and viscosity supersolution.
\end{itemize}
\end{defn}

The definition of viscosity solution presented here is a delayed version of Crandall--Ishii--Lions \cite{Lions-92} which first introduces the theory of viscosity solutions for standard PDEs.
Besides, some preliminaries are needed. First of all, we point out the continuity of the values function defined in \rf{value-function} with respect to both the time and state variables.

\begin{prop}\label{9.14.1} \sl
Assume (A1)-(A6). The value function $u(t,\g):[0,T]\ts\dbC\ra\dbR$ defined in \rf{value-function} is continuous  with respect to $t$ and $\g$.
\end{prop}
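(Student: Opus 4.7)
The spatial continuity is already in hand: the Lipschitz estimate $|u(t,\g)-u(t,\bar\g)|\les\tilde C\|\g-\bar\g\|_\dbC$ from Lemma \ref{9.23.2} gives continuity (in fact Lipschitz continuity) in the state argument uniformly in $t$. So the plan is to reduce joint continuity at a point $(t,\g)$ to continuity in the time variable alone, by writing
$$|u(t_n,\g_n)-u(t,\g)|\les|u(t_n,\g_n)-u(t_n,\tilde\g^{(n)})|+|u(t_n,\tilde\g^{(n)})-u(t,\g)|,$$
where $\tilde\g^{(n)}\in\dbC_{t_n}$ is the canonical constant extension of $\g$ to the interval $[t_n-\d,t_n]$ (e.g., $\tilde\g^{(n)}(s)=\g(s\wedge t)$ for $s\ges t-\d$, after extending $\g$ to the left by $\g(t-\d)$ when needed). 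The first term is controlled by Lemma \ref{9.23.2} provided $\|\g_n-\tilde\g^{(n)}\|_\dbC\to0$, which follows from the convergence hypothesis and the definition of the path norm. Thus the problem reduces to showing $u(t+h,\tilde\g)\to u(t,\g)$ as $h\downarrow0$ (and analogously as $h\uparrow0$).

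For this reduced task, I would invoke the dynamic programming principle from Theorem \ref{GDDP}:
$$u(t,\g)=\esssup_{v\in\mathcal{U}}G_{t,t+h}^{t,\g;v}\bigl[u(t+h,X_{t+h}^{t,\g;v})\bigr].$$
For each admissible $v$, decompose
$$\bigl|G_{t,t+h}^{t,\g;v}[u(t+h,X_{t+h}^{t,\g;v})]-u(t+h,\tilde\g)\bigr|
\les\bigl|G_{t,t+h}^{t,\g;v}[u(t+h,X_{t+h}^{t,\g;v})]-G_{t,t+h}^{t,\g;v}[u(t+h,\tilde\g)]\bigr|
+\bigl|G_{t,t+h}^{t,\g;v}[u(t+h,\tilde\g)]-u(t+h,\tilde\g)\bigr|.$$
The first piece is estimated by combining the standard BSDE stability (Proposition \ref{9.20.2}) with the Lipschitz bound in Lemma \ref{9.23.2} and the SDE displacement estimate $\dbE\|X_{t+h}^{t,\g;v}-\tilde\g\|_\dbC^2\les Ch$ from Proposition \ref{21.9.1.4} (since on the overlapping interval the two paths coincide, and on $[t,t+h]$ the SDE increment is of size $\sqrt h$ in $L^2$ thanks to boundedness of $v\in U$ and the linear growth of $b,\si$). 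The second piece is a constant-terminal BSDE on $[t,t+h]$: taking conditional expectations and using the growth condition (A6) together with the boundedness of $g(\cd,0,0,0,v)$ from (A3) yields a bound of order $h$. Taking the supremum over $v$ and using the uniformity of the estimates, I obtain $|u(t,\g)-u(t+h,\tilde\g)|\les C\sqrt h$, which goes to zero.

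The main obstacle is the delay structure: one must be meticulous about the comparison of paths $\g\in\dbC_t$ and $\tilde\g\in\dbC_{t+h}$ that live in different function spaces, and verify that the extended norm $\|\cd\|_\dbC$ of Section \ref{Sec2} behaves continuously as $h\to0$. This requires using the continuity of $\g$ to conclude $\|\g-\tilde\g\|_\dbC\to0$ on the overlapping segments, and carefully reconciling the sup over $[t-\d,t+h]$ at the boundary. A secondary concern is that (A6) allows polynomial growth of $g$ in $y$; this is handled by first observing that $u(t+h,\tilde\g)$ is deterministic and bounded (by Lemmas \ref{9.20.1}–\ref{9.23.2}), so the BSDE on $[t,t+h]$ with this terminal value stays bounded and the growth in $y$ contributes only an $O(h)$ correction. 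The downward limit $h\uparrow0$ is handled symmetrically by applying the DPP over $[t-h,t]$ with state $\tilde\g\in\dbC_{t-h}$ (suitably defined) and repeating the argument.
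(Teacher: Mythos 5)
Your proposal is correct and follows exactly the route the paper itself indicates but does not write out: the remark following the statement attributes the spatial (Lipschitz) continuity to Lemma \ref{9.23.2} and refers the time continuity to Peng's Proposition 5.5, and your reduction via the constant path extension plus the dynamic programming principle, the backward-semigroup stability estimate, and the displacement bound $\dbE\|X_{t+h}^{t,\g;v}-\tilde\g\|_{\dbC}^{2}\les Ch$ is precisely the delayed analogue of that classical argument. The only minor imprecisions --- the constant-terminal piece is $O(\sqrt{h})$ rather than $O(h)$ once the $Z$-integral from (A4) is accounted for, and the backward limit $h\uparrow 0$ additionally involves the modulus of continuity of $\g$ --- do not affect the conclusion.
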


\begin{rmk}\sl
A direct conclusion of \autoref{9.23.2} is that $u(t,\g)$ is Lipschitz continuous with respect to $\g$, and the continuity of $u(t,\g)$ with respect to $t$ is similar to the proof of Proposition 5.5 in Peng \cite{Peng-92}. There is no essential difference between the classical Lipschitz generator and the non-Lipschitz generator in our setting, so we leave out the proof of \autoref{9.14.1} here.
\end{rmk}

Based on the generator $g$ of BSDE \rf{BSDE} for $n\in\mathbb{N}$, we define a sequence of smooth functions $g_{n}$ as below:
\begin{equation}\label{gn}
g_{n}(t, \g, y, v) \triangleq\left(\rho_{n} * g(t, \g, \cdot, v)\right)(y),
\end{equation}
where $\rho_{n}: \mathbb{R} \rightarrow \mathbb{R}^{+}$ is a family of sufficiently smooth functions with the compact supports in $\left[-\frac{1}{n}, \frac{1}{n}\right]$ and satisfies
$$
\int_{\mathbb{R}} \rho_{n}(a) d a=1.
$$
As a result, concerning the generators $g_{n}$ for $n \in \mathbb{N}$, we can construct a sequence of BSDEs, for $s\in[t,T]$,
\begin{equation}\label{BSDE-gn}
 Y^{t,\g,n;v}(s)=\Phi(X_T^{t,\g;v})+\int_{s}^{T} g_n(r,X_r^{t,\g;v},Y^{t,\g,n;v}(r),v(r))dr
 -\int_{s}^{T} Z^{t,\g,n;v}(r)dW(r).
\end{equation}
Then, based on the above BSDE \rf{BSDE-gn}, the cost functional of a sequence of stochastic recursive control problems can be defined similarly, i.e., for $n\in\dbN$,
$$J_n(t,\g;v)\deq Y^{t,\g,n;v}(t),\q \hb{for $t\in[0,T],\ \g\in\dbC,\ v\in\cU$}.$$
At the same time, the value function of a sequence of stochastic recursive control problems has the corresponding forms,
for $n \in \mathbb{N}$,
\begin{equation}\label{10.13.1}
u_{n}(t, \g) \triangleq \esssup_{v \in \mathcal{U}} J_{n}(t, \g ; v), \q
\hb{for $t\in[0,T],\ \g\in\dbC$}.
\end{equation}
Also, Hamiltonian appears in the following form, for $n \in \mathbb{N}$,
\begin{equation}\label{10.24.1}
H_n(t,\g,r,p,A)\deq\sup _{v\in U}\left\{\langle p, b(t,\g,v)\rangle
+\frac{1}{2}Tr\left(\sigma(t,\g,v)\sigma^*(t,\g,v)A\right)
+g_n\left(t,\g,r,v\right)\right\}.
\end{equation}

Besides, we need a series of lemmas to prove the main result in this subsection.
The following \autoref{gn-g} proves that  the smoother generators $g_n$ defined in \rf{gn} are uniformly convergent to the generator $g$ in a compact subset of their domain.
\autoref{Yn-Y} implies the uniform convergence of the solutions $Y^{t, \g, n ; v}$ of BSDEs \rf{BSDE-gn} with smoother generators to the solution $Y^{t, \g ; v}$ of BSDE \rf{BSDE}  in the space of $L^{2 q}(\O,\sF_t;\dbR)$.
\autoref{un-u} displays that the value functions $u_n$ defined in \rf{10.13.1} are uniformly convergent to the value function $u$ defined in \rf{value-function} in a compact subset of their domain.
Based on the uniform convergence of $g_n$, $Y^{t, \g, n ; v}$, and $u_n$ to $g$, $Y^{t, \g; v}$, and $u$, respectively, it comes without a surprise in \autoref{hn-h} that the Hamiltonians $H_n$ defined in \rf{10.24.1} are convergent to $H$ defined in \rf{12.19.1} as well.
To end the preparation, we prove the stability property of viscosity solutions in \autoref{10.24.2} in order to obtain the connection between the value function $u$ defined in \rf{value-function} and the viscosity solution of HJB equation \rf{HJB-equation}.
Indeed, all the proofs of these mentioned lemmas will be presented in Section \ref{Auxiliary}.

\begin{lem}\label{gn-g} \sl
Assume (A1)-(A6). Then the sequence $g_{n}$ defined in \rf{gn} converges to $g$ uniformly in each compact subset of their domain.
\end{lem}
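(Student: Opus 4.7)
The plan is to reduce the convergence statement to a standard fact about mollifying a function that is jointly continuous in all its arguments. To that end, I would first upgrade the assumptions on $g$ to full joint continuity on $[0,T]\times\dbC\times\dbR\times U$. Assumption (A3) gives continuity in $(t,y,v)$ and (A4) (without the $z$ term, since we are in the setting where $g$ does not depend on $z$) gives the Lipschitz bound $|g(t,\g,y,v)-g(t,\bar\g,y,v)|\les \tilde L\,\|\g-\bar\g\|_\dbC$. Along any convergent sequence $(t_k,\g_k,y_k,v_k)\to(t,\g,y,v)$, inserting an intermediate term $g(t_k,\g,y_k,v_k)$ and applying (A4) to the $\g$-difference and (A3) to the remaining piece shows $g$ is jointly continuous.

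Second, I would fix an arbitrary compact set $K\subset[0,T]\times\dbC\times\dbR\times U$ and work on the slightly enlarged compact set
\[
K_1\deq\{(t,\g,y',v):(t,\g,y,v)\in K,\ |y'-y|\les 1\}.
\]
$K_1$ is compact because it is the image of the compact set $K\times[-1,1]$ under the continuous map $(t,\g,y,v,a)\mapsto(t,\g,y+a,v)$. By joint continuity, $g$ is uniformly continuous on $K_1$, so for any $\e>0$ there exists $\eta>0$ such that $(t,\g,y,v),(t,\g,y',v)\in K_1$ and $|y-y'|<\eta$ imply $|g(t,\g,y,v)-g(t,\g,y',v)|<\e$.

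Third, I would pick $N$ with $1/N<\min(1,\eta)$. Then for every $n\ges N$ and every $(t,\g,y,v)\in K$, both $(t,\g,y,v)$ and $(t,\g,y-a,v)$ lie in $K_1$ whenever $|a|\les 1/n$, so using $\int\rho_n=1$,
\[
|g_n(t,\g,y,v)-g(t,\g,y,v)|
=\Big|\int_{-1/n}^{1/n}\rho_n(a)\big[g(t,\g,y-a,v)-g(t,\g,y,v)\big]da\Big|
\les \e\int_{-1/n}^{1/n}\rho_n(a)\,da=\e,
\]
and this bound is uniform in $(t,\g,y,v)\in K$. Letting $\e\downarrow 0$ gives the claimed uniform convergence.

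The only mild subtlety — and really the sole technical point — is producing a compact enlargement on which the mollification lives; (A6) is not used here since the bound $|y|\les R$ coming from compactness of $K$ already removes any polynomial-growth concern. The monotonicity (A5) plays no role in this lemma either and will only enter later when $g_n$ is used as a generator of BSDEs.
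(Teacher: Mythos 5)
Your proposal is correct and follows essentially the same route as the paper's proof: both reduce the claim to uniform continuity of $g$ on a compact enlargement of $K$ in the $y$-direction and then apply the standard mollification estimate using $\int\rho_n=1$. You merely spell out two points the paper asserts without detail (the joint continuity of $g$ obtained by combining (A3) with the Lipschitz bound in $\g$ from (A4), and the explicit construction of the compact set $K_1$), which is a welcome clarification but not a different argument.
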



\begin{lem}\label{Yn-Y} \sl
Assume (A1)-(A6). Then for all $v\in\cU$, we have
$$
\lim _{n \rightarrow \infty} \sup _{(t, \g) \in K}
\dbE\left[\left|Y^{t, \g, n ; v}(t)-Y^{t, \g ; v}(t)\right|^{2}\right]=0,
$$
where $K$ is an arbitrary compact subset of $[0,T]\ts\dbC$, and $Y^{t, \g ; v}(\cd)$ and $Y^{t, \g, n ; v}(\cd)$ are the solutions of BSDE \rf{BSDE} and BSDE \rf{BSDE-gn}, respectively.
\end{lem}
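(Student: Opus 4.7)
The plan is to derive from the BSDE equations for $Y^{t,\g,n;v}$ and $Y^{t,\g;v}$ an a priori $L^2$ estimate of their difference controlled only by the discrepancy $g_n-g$ evaluated along the trajectory of $Y^{t,\g;v}$, and then to pass to the limit using \autoref{gn-g} combined with the high-moment bounds of \autoref{21.9.1.4} and \autoref{9.20.2}.

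First, observe that the mollification $g_n(t,\g,y,v)=(\rho_n\ast g(t,\g,\cdot,v))(y)$ inherits the monotonicity condition (A5) with the same constant $\mu$, by a routine change of variables under the convolution integral. Set $\Delta Y^n:=Y^{t,\g,n;v}-Y^{t,\g;v}$ and $\Delta Z^n:=Z^{t,\g,n;v}-Z^{t,\g;v}$. Applying It\^o's formula to $|\Delta Y^n(s)|^2$ on $[s,T]$, using $\Delta Y^n(T)=0$ and the splitting
$$g_n(r,X_r,Y^n,v)-g(r,X_r,Y,v)=\bigl[g_n(r,X_r,Y^n,v)-g_n(r,X_r,Y,v)\bigr]+\bigl[g_n(r,X_r,Y,v)-g(r,X_r,Y,v)\bigr],$$
then combining this monotonicity bound with Young's inequality, taking expectation (the stochastic integral is a true martingale thanks to the $S^{2q}\ts M^2_{\dbF}$ estimates of \autoref{9.20.2}), and finally applying a backward Gronwall argument, one obtains the key inequality
$$\sup_{s\in[t,T]}\dbE|\Delta Y^n(s)|^2\les C\,\dbE\int_t^T\bigl|g_n\bigl(r,X_r^{t,\g;v},Y^{t,\g;v}(r),v(r)\bigr)-g\bigl(r,X_r^{t,\g;v},Y^{t,\g;v}(r),v(r)\bigr)\bigr|^2 dr,$$
with $C=C(\mu,T)$ independent of $(t,\g,v,n)$.

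The remaining task is to show the right-hand side vanishes as $n\to\infty$ uniformly in $(t,\g)\in K$. Fix a large $N>0$ and partition the integrand according to the event $\{\|X_r^{t,\g;v}\|_\dbC\vee|Y^{t,\g;v}(r)|\les N\}$. On this event, combined with the compactness of $U$, the arguments of $g_n$ and $g$ lie in a fixed compact subset of $[0,T]\ts\dbC\ts\dbR\ts U$, so \autoref{gn-g} makes this contribution $o(1)$ as $n\to\infty$. On the complementary event, the polynomial growth (A6) is inherited uniformly in $n$ by the family $\{g_n\}$ (since $\rho_n$ has support in $[-1/n,1/n]$), and together with (A3)--(A4) yields the bound $|g_n-g|^2\les C(1+\|X_r\|_\dbC^{2p}+|Y(r)|^{2p})$; Chebyshev's inequality combined with the high-moment estimates of \autoref{21.9.1.4} and \autoref{9.20.2}, applied for $q$ large enough to absorb $p$, then bounds this piece by $C\cdot N^{-\alpha}$ uniformly over $(t,\g)\in K$, because $\sup_{\g\in K}\|\g\|_\dbC<\infty$ by compactness of $K$ in $\dbC$. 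Sending $N\to\infty$ first and then $n\to\infty$ concludes the argument.

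The main obstacle is precisely this final uniformity: \autoref{gn-g} delivers uniform convergence of $g_n\to g$ only on \emph{compact} subsets of the domain, whereas $Y^{t,\g;v}(r)$ and $X_r^{t,\g;v}$ range over unbounded sets and depend on the parameter $(t,\g)$. Reconciling these requires the truncation trick above, which succeeds only because (A6) imposes polynomial rather than exponential growth in $y$, because \autoref{9.20.2} supplies moments of arbitrary order, and because the compactness of $K$ in $\dbC$ transfers those pointwise moment bounds into a single bound uniform in the parameter.
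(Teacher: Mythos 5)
Your proposal is correct and follows essentially the same route as the paper: mollified generators inherit (A5) with the same $\mu$, an It\^o/weighted-norm (equivalently Gronwall) estimate reduces the problem to $\dbE\int_t^T|g_n-g|^2\,dr$ evaluated along $(X_r^{t,\g;v},Y^{t,\g;v}(r))$, and a truncation at level $N$ separates a compact region (handled by the uniform convergence of \autoref{gn-g}) from tails controlled uniformly in $n$ via the polynomial growth in (A6), Chebyshev's inequality, and the moment bounds of \autoref{21.9.1.4} and \autoref{9.20.2}. The paper splits the tail into two events ($\|X_s\|_\dbC\ges N$ and $\sup_s|Y(s)|\ges N$) rather than one union, but this is only a cosmetic difference.
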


\begin{lem}\label{un-u} \sl
Assume (A1)-(A6). Then the sequence $u_{n}$ defined in \rf{10.13.1} converges to $u$ defined in \rf{value-function} uniformly in each compact subset of their domain.
\end{lem}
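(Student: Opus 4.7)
The plan is to reduce the uniform convergence of the value functions to the $L^2$-convergence of the underlying BSDE solutions furnished by \autoref{Yn-Y}, after upgrading that statement to be uniform in the control.

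First I would establish the deterministic ordinary-supremum representations
$$u(t,\g)=\sup_{v\in\cU}\dbE[Y^{t,\g;v}(t)],\qquad u_n(t,\g)=\sup_{v\in\cU}\dbE[Y^{t,\g,n;v}(t)].$$
Both $u$ and $u_n$ are deterministic by \autoref{9.20.1}, applied to the original problem and to the mollified one, since $g_n$ inherits (A1)--(A6) with the same constants ($\rho_n\ges 0$ together with a change of variables preserves the monotonicity (A5); the compact support of $\rho_n$ only shifts $y$ by $O(1/n)$, so the polynomial growth (A6) is retained). The ``$\ges$'' direction in each representation is obtained by taking expectations of the a.s.\ bound from \autoref{9.23.6}; the reverse inequality follows from the $\e$-optimal control supplied by the same lemma on letting $\e\to 0$. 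An elementary $|\sup-\sup|\les\sup|\cd|$ inequality followed by Jensen's inequality then delivers
$$|u_n(t,\g)-u(t,\g)|\les\sup_{v\in\cU}\big(\dbE\big[|Y^{t,\g,n;v}(t)-Y^{t,\g;v}(t)|^2\big]\big)^{1/2}.$$

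The decisive step is to strengthen \autoref{Yn-Y} to the uniform-in-$v$ statement
$$\lim_{n\to\i}\sup_{(t,\g)\in K}\sup_{v\in\cU}\dbE\big[|Y^{t,\g,n;v}(t)-Y^{t,\g;v}(t)|^2\big]=0$$
for every compact $K\subset[0,T]\ts\dbC$. Inspecting the Gronwall-type estimate underlying \autoref{Yn-Y}, the only $v$-dependent ingredient is the mollification error $g_n(r,X^{t,\g;v}_r,Y^{t,\g;v}(r),v(r))-g(r,X^{t,\g;v}_r,Y^{t,\g;v}(r),v(r))$; the remaining constants depend only on $\mu$, $M$, $T$ and the compact bound on $U$. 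The a priori $L^{2q}$-estimates of \autoref{21.9.1.4} and \autoref{9.20.2} keep $X^{t,\g;v}_r$ and $Y^{t,\g;v}(r)$ in balls whose radii depend only on $\|\g\|_\dbC$ and the parameters in (A1)--(A6); combined with the $\dbC$-equicontinuity of $K$ furnished by Arzel\`a--Ascoli and a standard truncation on these $L^{2q}$-balls, the problem reduces to the behaviour of $g_n-g$ on a fixed compact set, where \autoref{gn-g} provides the uniform convergence.

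The main obstacle is precisely this uniform-in-$v$ upgrade -- in particular, promoting the convergence of $g_n-g$ inside the expectation from pointwise to uniform over the full parameter set $(t,\g,v)\in K\ts\cU$. The truncation is routine (Chebyshev handles the tails of $\|X^{t,\g;v}\|_\dbC$ and $|Y^{t,\g;v}|$), but it must be executed with Gronwall constants that remain uniform over $v\in\cU$, which is the delicate bookkeeping underlying the argument. Once this is in place, the two displays above combine to give $\sup_{(t,\g)\in K}|u_n(t,\g)-u(t,\g)|\to 0$, which is the desired uniform convergence.
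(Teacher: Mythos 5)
Your proposal is correct and follows essentially the same route as the paper: both reduce the convergence of the value functions to the convergence of the BSDE solutions in \autoref{Yn-Y}, yours via the representation $u=\sup_v\dbE[Y^{t,\g;v}(t)]$ and the elementary $|\sup-\sup|\les\sup|\cdot|$ bound, the paper's via $\e$-optimal controls $\bar v,\tilde v$ for $u$ and $u_n$ respectively. The one substantive difference is that you explicitly flag and resolve the need for a uniform-in-$v$ version of \autoref{Yn-Y}; this is in fact necessary, since the paper's own proof selects controls $\bar v,\tilde v$ that depend on $(t,\g)$ and $\e$ and then invokes \autoref{Yn-Y} as if they were fixed, which the stated (fixed-$v$) form of that lemma does not literally cover. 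Your observation that the Gronwall constants and the truncation estimates in the proof of \autoref{Yn-Y} depend on $v$ only through the compact set $U$, so that the convergence is automatically uniform over $\cU$, is the right way to close that gap, and your argument is if anything slightly more careful than the one in the paper.
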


\begin{lem}\label{hn-h} \sl
Assume (A1)-(A6). Then the sequence $H_{n}$ defined in \rf{10.24.1} converges to $H$ defined in \rf{12.19.1} uniformly in each compact subset of their domain.
\end{lem}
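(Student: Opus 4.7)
The plan is to reduce the uniform convergence of the Hamiltonians to the uniform convergence of the generators established in \autoref{gn-g}. The key observation is that $H_n$ and $H$ differ only through their generator terms: the drift and diffusion contributions inside the supremum over $v\in U$ are identical. Using the elementary inequality $|\sup_v f_1(v)-\sup_v f_2(v)|\les \sup_v |f_1(v)-f_2(v)|$, I would obtain the pointwise bound
$$|H_n(t,\g,r,p,A)-H(t,\g,r,p,A)|\les \sup_{v\in U}|g_n(t,\g,r,v)-g(t,\g,r,v)|.$$
Note that the right-hand side does not depend on $p$ or $A$, which is what makes the argument clean.

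Next, given an arbitrary compact subset $K\subset [0,T]\ts\dbC\ts\dbR\ts\dbR^n\ts\dbS^n$, let $\pi(K)$ denote the continuous projection of $K$ onto the variables $(t,\g,r)\in[0,T]\ts\dbC\ts\dbR$; this projection is compact as the continuous image of a compact set. Since the admissible control set $\cU$ takes values in a compact set $U\subset\dbR^m$ by \rf{22.1.21.1}, the product $\pi(K)\ts U$ is a compact subset of $[0,T]\ts\dbC\ts\dbR\ts U$, which is the domain of both $g$ and $g_n$ in the variables that actually appear in the bound above.

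By \autoref{gn-g}, $g_n\to g$ uniformly on $\pi(K)\ts U$, so
$$\sup_{(t,\g,r,p,A)\in K}|H_n(t,\g,r,p,A)-H(t,\g,r,p,A)|\les \sup_{(t,\g,r,v)\in\pi(K)\ts U}|g_n(t,\g,r,v)-g(t,\g,r,v)|\to 0$$
as $n\to\i$, which completes the proof. Since the compactness needed is immediately inherited from $K$ and the compactness of $U$, and the sup-separation inequality is elementary, there is no substantive obstacle here; the lemma is essentially a corollary of \autoref{gn-g} once one observes that $p$ and $A$ cancel out of the comparison.
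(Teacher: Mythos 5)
Your proposal is correct and follows essentially the same route as the paper's proof: both reduce $|H_n-H|$ to $\sup_{v\in U}|g_n-g|$ via the elementary sup-difference inequality and then invoke \autoref{gn-g} on a compact set of the form (projection of $K$) $\times\, U$. Your explicit construction of $\pi(K)\times U$ is just a slightly more detailed version of the paper's compact set $\widetilde K$.
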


\begin{lem}[{\bf Stability}]\label{10.24.2} \sl
Let $u_{n}$ be a viscosity subsolution (resp. supersolution) to the following $\mathrm{PDE}$
$$
\frac{\partial}{\partial t} u_{n}(t, \g)+H_{n}\left(t, \g, u_{n}(t, \g), D_{x} u_{n}(t, \g), D_{x}^{2} u_{n}(t, \g)\right)=0, \quad(t, \g) \in[0, T) \times\dbC,
$$
where $H_{n}(t, \g, r, p, A):[0, T] \times \dbC \times \mathbb{R} \times \mathbb{R}^{n} \times \mathbb{S}^{n} \rightarrow \mathbb{R}$ is continuous and satisfies the ellipticity condition
\begin{equation}\label{12.10.1}
H_{n}(t, \g, r, p, X) \les H_{n}(t, \g, r, p, Y), \quad \text { whenever } X \les Y.
\end{equation}
Assume that $H_{n}$ and $u_{n}$ converge to $H$ and $u$, respectively, uniformly in each compact subset of their own domains. Then $u$ is a viscosity subsolution (resp. supersolution) of the limit equation
\begin{equation}\label{11.4.1}
\frac{\partial}{\partial t} u(t, \g)+H\left(t, \g, u(t, \g), D_{x} u(t, \g), D_{x}^{2} u(t, \g)\right)=0.
\end{equation}
\end{lem}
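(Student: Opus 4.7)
The plan is to adapt the classical Crandall--Ishii--Lions stability argument for viscosity solutions to the path-dependent setting on $[0,T]\times\dbC$. I handle only the subsolution case; the supersolution case is symmetric. The terminal condition $u(T,\g)\les\Phi(\g)$ is inherited directly from $u_n(T,\g)\les\Phi(\g)$ together with the pointwise limit $u_n\to u$ (implied by uniform convergence on compacta). For the interior inequality, fix $\phi\in C^{1,2}_{\text{Lip}}([0,T]\times\dbC;\dbR)$ and assume $\phi-u$ attains a global minimum at $(t_0,\g_0)\in[0,T)\times\dbC$. The goal is
$$\partial_t\phi(t_0,\g_0)+H\big(t_0,\g_0,\phi(t_0,\g_0),\nabla_\g\phi(t_0,\g_0),\nabla^2_\g\phi(t_0,\g_0)\big)\ges 0.$$

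First I would perturb $\phi$ so that the minimum becomes strict: set
$$\tilde\phi_\e(t,\g):=\phi(t,\g)+\e|t-t_0|^2+\e\,Q(\g-\g_0),$$
where $Q:\dbC\to\dbR^+$ is a G\^ateaux-smooth quadratic-type functional vanishing only at the zero path, for instance $Q(\eta)=\int_{-\delta}^0|\eta(s)|^2\nu(ds)$ for a smooth finite measure $\nu$ on $[-\delta,0]$. Then $\tilde\phi_\e\in C^{1,2}_{\text{Lip}}$, the perturbation is strictly positive off $(t_0,\g_0)$, and its contributions to the partial derivatives at $(t_0,\g_0)$ are either zero (for $\partial_t$) or vanish as $\e\to 0$ (for $\nabla_\g$ and $\nabla^2_\g$). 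Choose a bounded closed neighborhood $K$ of $(t_0,\g_0)$ in $[0,T]\times\dbC$ on which $u_n\to u$ uniformly. The strict-minimum property together with the uniform convergence yields a sequence $(t_n,\g_n)\in K$ at which $\tilde\phi_\e-u_n$ attains its infimum over $K$, with $(t_n,\g_n)\to(t_0,\g_0)$; by adjoining to $\tilde\phi_\e$ outside a smaller neighborhood a $C^{1,2}_{\text{Lip}}$ cutoff that grows sufficiently fast, the $K$-local minimum becomes a global minimum without altering any derivative of $\tilde\phi_\e$ at $(t_n,\g_n)$ for large $n$. Applying the viscosity subsolution inequality for $u_n$ at $(t_n,\g_n)$ with this modified test function gives
$$\partial_t\tilde\phi_\e(t_n,\g_n)+H_n\big(t_n,\g_n,\tilde\phi_\e(t_n,\g_n),\nabla_\g\tilde\phi_\e(t_n,\g_n),\nabla^2_\g\tilde\phi_\e(t_n,\g_n)\big)\ges 0.$$
Since the five arguments above remain in a fixed compact subset of $[0,T]\times\dbC\times\dbR\times\dbR^n\times\dbS^n$ (by continuity of $\tilde\phi_\e$ and its derivatives, which are Lipschitz), the hypothesis $H_n\to H$ uniformly on compacta permits passage to the limit $n\to\infty$; letting $\e\to 0$ afterwards recovers the subsolution inequality for $\phi$.

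The main obstacle is that $\dbC$ is infinite-dimensional, so closed bounded sets are not compact and the ``strict minimum plus closed-ball-minimizer'' device from finite-dimensional viscosity theory does not apply directly. I would circumvent this along the lines of Fuhrman--Masiero--Tessitore \cite{Marco-2010}: combine the strict-minimum perturbation $\e Q$ with an explicit $C^{1,2}_{\text{Lip}}$ bump/cutoff that forces the global minimum of $\tilde\phi_\e-u_n$ to remain inside a prescribed compact neighborhood. Verifying that both $Q$ and the cutoff lie in $C^{1,2}_{\text{Lip}}$ in the G\^ateaux sense (with measure-valued $\nabla_\g$ and $\nabla_\g^2$) is a technical point requiring explicit linear and bilinear representations on $\dbC$; the ellipticity condition \eqref{12.10.1} enters implicitly, ensuring that the framework is consistent and the limit equation inherits the correct monotonicity in the Hessian argument.
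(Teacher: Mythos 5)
Your proposal follows essentially the same route as the paper's proof, namely the classical Fleming--Soner stability argument: reduce to a strict minimizer, extract minimizers $(t_n,\g_n)$ of $\phi-u_n$ converging to the minimizer of $\phi-u$, invoke the subsolution inequality for $u_n$ at $(t_n,\g_n)$, and pass to the limit using the locally uniform convergence of $H_n$ to $H$ and of $u_n$ to $u$. You are in fact more explicit than the paper about the two technical points its proof elides --- upgrading the local minimum of $\phi-u_n$ to a global one so that the definition of viscosity subsolution applies, and the failure of local compactness of $\dbC$ when extracting the minimizing sequence.
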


Based on above lemmas, we are going to establish the connection between the value function  defined in \rf{value-function}
and the viscosity solution of HJB equation \rf{HJB-equation}.

\begin{thm}\label{u-HJB} \sl
Assume (A1)-(A6) hold. Then $u$ defined in \rf{value-function} is a viscosity solution of HJB equation \rf{HJB-equation}.
\end{thm}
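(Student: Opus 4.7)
The plan is to approach the theorem by a mollification-plus-stability argument, using all the machinery already prepared in this subsection. The key idea is that although $g$ is only continuous and monotonic (hence non-Lipschitz) in $y$, the mollified generators $g_n$ defined in \rf{gn} are smooth with bounded derivatives in $y$, and in particular Lipschitz in $y$. Thus each approximating problem falls inside the classical Lipschitz framework, for which the HJB characterization is already available.

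First I would verify the terminal condition. For $(T,\g)\in\{T\}\times\dbC$, the only admissible ``control'' leaves $X_T^{T,\g;v}=\g$, hence $Y^{T,\g;v}(T)=\Phi(\g)$ by \rf{BSDE} and therefore $u(T,\g)=\Phi(\g)$ directly from the definition \rf{value-function}. Next, for each fixed $n\in\dbN$, I would prove that the approximating value function $u_n$ defined in \rf{10.13.1} is a viscosity solution of the HJB equation
\[
\frac{\partial}{\partial t}u_n(t,\g)+H_n\!\left(t,\g,u_n(t,\g),\nabla_\g u_n(t,\g),\nabla_\g^2 u_n(t,\g)\right)=0,\qquad u_n(T,\g)=\Phi(\g).
\]
This is the classical Lipschitz-in-$y$ setting with delay: the dynamic programming principle established in \autoref{GDDP} applies verbatim to $u_n$ (assumptions (A1)--(A6) are satisfied by $g_n$ with Lipschitz constant depending on $n$), so one derives the subsolution and supersolution inequalities in the standard way. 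Concretely, given a test function $\phi\in C^{1,2}_{\text{Lip}}([0,T]\times\dbC;\dbR)$ touching $\phi-u_n$ at $(t,\g)$ from below (resp.\ above), I would apply \rf{9.24.1} with horizon $\tau$, substitute $u_n(t+\tau,X^{t,\g;v}_{t+\tau})$ using the backward semigroup, expand via the infinite-dimensional It\^o/chain-rule in the path-dependent setting borrowed from Fuhrman--Masiero--Tessitore \cite{Marco-2010} and Chen--Wu \cite{Chen-Wu-12}, and let $\tau\downarrow0$ to obtain the required inequality involving $H_n$.

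With $u_n$ being a viscosity solution of the PDE with Hamiltonian $H_n$, the final step is to pass to the limit. By \autoref{un-u} the sequence $u_n$ converges to $u$ uniformly on compact subsets of $[0,T]\times\dbC$, and by \autoref{hn-h} the Hamiltonians $H_n$ converge to $H$ uniformly on compact subsets of $[0,T]\times\dbC\times\dbR\times\dbR^n\times\dbS^n$. The ellipticity requirement \rf{12.10.1} for $H_n$ is immediate from \rf{10.24.1} because $\sigma\sigma^*$ is positive semidefinite and the map $A\mapsto\mathrm{Tr}(\sigma\sigma^* A)$ is monotone in $A$ on $\dbS^n$, and the supremum over $v\in U$ preserves this monotonicity. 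Hence the hypotheses of the stability lemma \autoref{10.24.2} are met, and it yields that the limit $u$ is simultaneously a viscosity subsolution and a viscosity supersolution of \rf{11.4.1}, which is precisely the HJB equation \rf{HJB-equation}. Combined with the terminal condition $u(T,\g)=\Phi(\g)$, this completes the argument.

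The main obstacle I expect is step two: establishing in a rigorous way that each $u_n$ is a viscosity solution of its HJB equation in the path-dependent, infinite-dimensional state space $\dbC$. The difficulty is the lack of local compactness of $\dbC$ and the need for a chain rule adapted to the segment variable $X_s$; this is where one has to lean carefully on the semigroup/G\^ateaux-derivative framework of \cite{Marco-2010} together with the delayed BSDE estimates from \autoref{9.20.2}. Once that classical-regime statement is in hand, the mollification-stability machinery already assembled in \autoref{gn-g}--\autoref{10.24.2} makes the extension to the non-Lipschitz generator essentially automatic.
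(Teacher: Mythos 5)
Your overall architecture (mollify $g$ in $y$, characterize each $u_n$ as a viscosity solution of the HJB equation with Hamiltonian $H_n$, then pass to the limit via \autoref{un-u}, \autoref{hn-h} and the stability lemma \autoref{10.24.2}) is exactly the paper's strategy, and your treatment of the terminal condition and of the ellipticity requirement \rf{12.10.1} is fine. However, there is a genuine gap at the pivotal claim that the mollified generators $g_n$ are ``Lipschitz in $y$,'' so that ``each approximating problem falls inside the classical Lipschitz framework.'' Mollification in $y$ alone does not deliver a global Lipschitz constant: the bound one gets is
\[
\left|g_{n}(t,\g,y,v)-g_{n}(t,\g,\bar y,v)\right|\;\les\; C(n)\,\sup_{|a|\les \frac1n}\big|g(t,\g,y-a,v)\big|\,\big|y-\bar y\big|,
\]
and by (A4) together with the boundedness of $g(t,0,0,0,v)$ one only has $|g(t,\g,0,v)|\les C\left(1+\|\g\|_{\dbC}\right)$, which is unbounded over $\g\in\dbC$. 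Hence the Lipschitz constant of $g_n$ in $y$ blows up with $\|\g\|_{\dbC}$, and the classical result you want to invoke (Peng's Theorem 7.3, in its delayed form) is not applicable to $g_n$ as it stands. This is precisely why the paper's proof is organized in two steps: Step 1 treats the case where $|g(t,\g,0,v)|$ is uniformly bounded, in which case the computation above does give a global Lipschitz constant $C_p(M,n)$ and the classical theory applies to each $g_n$; Step 2 then removes this restriction by a second approximation layer $g_m(t,\g,y,v)=g(t,\g,y,v)-g(t,\g,0,v)+\Pi_m\!\left(g(t,\g,0,v)\right)$ with $\Pi_m$ a truncation, verifying that each $g_m$ satisfies (A3)--(A6) with $g_m(t,\g,0,v)$ bounded, proving uniform convergence $g_m\to g$ on compacts (which here is even exact for $m$ large), and applying the stability lemma a second time. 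Your proposal needs this extra truncation layer (or an equivalent device) to be complete.

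A secondary, less serious divergence: where the paper simply cites the known Lipschitz-case result to conclude that $u_n$ solves its HJB equation, you propose to re-derive it from \autoref{GDDP} via the backward semigroup, an infinite-dimensional It\^o expansion, and $\tau\downarrow 0$. You correctly identify this as the hard analytic step in the path space $\dbC$; the paper deliberately avoids redoing it by outsourcing it to the existing literature, which is the cleaner route given that the whole point of the mollification is to land in a regime already covered there.
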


\begin{proof}
We divide the proof into two steps.

\ms

\textbf{Step 1}. Assume that for all $(t,\g,v) \in[0,T]\times\dbC\times U$,  $|g(t,\g,0,v)|$ is uniformly bounded.

\ms

Then from the assumption, we have the global Lipschitz condition of $g_{n}(t, \g, y, v)$ with respect to $y$. In fact, for any $(t, \g, v) \in[0, T] \times \dbC \times U$ and  $y,\bar y \in \mathbb{R}$, (A6) implies
$$
\begin{aligned}
\ds &\left|g_{n}\left(t, \g, y, v\right)-g_{n}\left(t, \g,\bar y, v\right)\right| \\
\ns=&\ds\left|\int_{|a| \les \frac{1}{n}} g(t,\g, a, v)\left[\rho_{n}\left(y-a\right)
-\rho_{n}\left(\bar y-a\right)\right] d a\right|\\
\ns\les &\ds \max _{a \in\left[-\frac{1}{n}, \frac{1}{n}\right]}|g(t, \g, a, v)| \int_{|a|
\les \frac{1}{n}}\left|\rho_{n}\left(y-a\right)-\rho_{n}\left(\bar y-a\right)\right| d a \\
\ns\les &\ds \max _{a \in\left[-\frac{1}{n}, \frac{1}{n}\right]}\left[|g(t, \g, 0, v)|
+M\left(1+|a|^{p}\right)\right] \int_{|a| \les \frac{1}{n}} C_{p}(n)\left|y-\bar y\right| d a \\
\ns\les &\ds C_{p}(M, n)\left|y-\bar y\right|.
\end{aligned}$$
So $g_n$ satisfies the assumption of Theorem 7.3 in Peng \cite{Peng-97}, by which we know that $u_{n}(t, \g)$ is a viscosity solution of the following equation, for $n\in\mathbb{N}$,
$$
\left\{\begin{array}{l}
\ds \frac{\partial}{\partial t} u_{n}(t, \g)+H_{n}\left(t, \g, u_{n}(t, \g), \nabla_{\g} u_{n}(t, \g), \nabla_{\g}^{2} u_{n}(t, \g)\right)=0, \quad~
(t, \g) \in[0, T) \times \dbC, \\
\ns\ds u_{n}(T, \g)=\Phi(\g).
\end{array}\right.
$$
Here $u_{n}$ and $H_{n}$ are defined in \rf{10.13.1} and \rf{10.24.1}, respectively.
Now, by \autoref{gn-g}-\ref{hn-h}, we have that the uniform convergence of $g_{n}$ to $g$, $Y^{t, \g, n ; v}(t)$ to $Y^{t, \g; v}(t)$, $u_{n}$ to $u$ and $H_{n}$ to $H$ holds in each compact subset of their own domains as $n \rightarrow \infty$. In addition, $H_{n}$ satisfies the ellipticity condition \rf{12.10.1}. Hence, from the stability of viscosity solution (\autoref{10.24.2}), we have that $u$ is a viscosity solution of the limit equation
$$
\frac{\partial}{\partial t} u(t, \g)+H\left(t, \g, u(t, \g), \nabla_{\g} u(t, \g), \nabla_{\g}^{2} u(t,\g)\right)=0.
$$
As for the terminal value of the above equation, by Lemma \ref{un-u}, we have
\begin{equation*}
u(T, \g)=\lim_{n\to\infty}u_n(T, \g)=\Phi(\g).
\end{equation*}
Thereby $u$ is a viscosity solution of HJB equation \rf{HJB-equation}.

\ms

\textbf{Step 2}. For all $(t, \g, v) \in$ $[0, T] \times \dbC \times U$, $|g(t, \g, 0, v)|$ is not necessary to be uniformly bounded.

\ms

In order to prove the result, we construct the sequence of functions
$$g_{m}(t, \g, y, v) \triangleq g(t, \g, y, v)-g(t, \g, 0, v)+\Pi_{m}(g(t, \g, 0, v)), \q \hb{for}\q m \in \mathbb{N},$$
where
$$\Pi_{m}(x)=\frac{\inf (m,|x|)}{|x|} x.$$
For these $g_{m}$, $m \in \mathbb{N}$, we get a sequence of BSDEs on the interval $[t, T]$
$$Y^{t, \g, m ; v}(t)=\Phi\left(X_{T}^{t, \g ; v}\right)
+\int_{t}^{T} g_{m}\left(s, X_{s}^{t, \g ; v}, Y^{t, \g, m ; v}(s), v(s)\right) d s
-\int_{t}^{T} Z^{t, \g, m ; v}(s) d W(s).$$
Similarly, we define the corresponding cost functional
$$
J_{m}(t, \g ; v) \triangleq Y^{t, \g, m ; v}(t), \quad \text { for }\ v \in \mathcal{U},\ t \in[0, T],\ \g \in \dbC,
$$
the value function
$$
u_{m}(t, \g) \triangleq \operatorname{esssup}_{v \in \mathcal{U}} J_{m}(t, \g ; v), \quad \text { for }\ t \in[0, T],\ \g \in \dbC,
$$
and the Hamiltonian, for $(t,\g,r,p,A)\in [0,T]\ts\dbC\ts\dbR\ts\dbR^n\ts\dbS^n$,
\begin{equation*}
H_m(t,\g,r,p,A)\deq\sup _{v\in U}\left\{\langle p, b(t,\g,v)\rangle
+\frac{1}{2}Tr\left(\sigma(t,\g,v)\sigma^*(t,\g,v)A\right)
+g_m\left(t,\g,r,v\right)\right\}.
\end{equation*}
Note that $g_{m}(t, \g, 0, v)=\Pi_{m}(g(t, \g, 0, v))$, which implies that $g_{m}(t, \g, 0, v)$ is uniformly bounded. In addition, it is easy to check that each $g_{m}$ satisfies the assumptions (A3)-(A6). Hence $g_{m}$ satisfies the assumptions of Step 1.
From Step 1, we see that $u_{m}$ is a viscosity solution of the following equation
\begin{equation}\label{10.25.1}
\left\{\begin{array}{l}
\ds \frac{\partial}{\partial t} u_{m}(t, \g)+H_{m}\left(t, \g, u_{m}(t, \g), D_{x} u_{m}(t, \g), D_{x}^{2} u_{m}(t, \g)\right)=0, \quad~
(t, \g) \in[0, T) \times \dbC, \\
\ns\ds u_{m}(T, \g)=\Phi(\g).
\end{array}\right.
\end{equation}
Next, we prove that as $m \rightarrow \infty$, the uniform convergence of $g_{m}$ to $g$, $Y^{t, \g, m ; v}(t)$ to $Y^{t, \g ; v}(t)$, $u_{m}$ to $u$ and $H_{m}$ to $H$ holds in each compact subset of their own domains too. It should be pointed out that due to the different definitions of $g_{m}$ here from $g_{n}$ in \rf{gn}, only the proof for the convergence of $g_{m}$ to $g$ is very different from \autoref{gn-g}, and the other convergence can be proved similarly as the proof of \autoref{Yn-Y}-\ref{hn-h} in turn.

\ms

From (A3), we have the continuity of $g$, so for each compact set $K \subset[0, T] \times \dbC\times \mathbb{R} \times U$ and any $(t, \g, y, v) \in K$, $g(t, \g, y, v)$ is bounded by a positive integer $M_{K}$. This implies the uniform convergence of $g_{m}$ to $g$ in $K$.
Hence, when $m \ges M_{K}$, we have
$$
\begin{aligned}
\sup _{(t, \g, y, v) \in K}\left|g_{m}(t, \g, y, v)-g(t, \g, y, v)\right|=\sup _{(t, \g, y, v) \in K}\left|g(t, \g, 0, v)-\Pi_{m}(g(t, \g, 0, v))\right|=0.
\end{aligned}
$$
Hence as $m \rightarrow \infty$,
$g_{m}$ uniform converges to $g$ in each compact subset of their domain.

\ms
Finally, by \autoref{10.24.2} again, even if $|g(t, \g, 0, v)|$ is not necessary to be uniformly bounded for any $(t, \g, v) \in[0, T] \times \dbC\times U$, we have that $u$ satisfies the limit equation of \rf{10.25.1}, which together with the fact $u(T, \g)=\Phi(\g)$ shows that $u$ is a viscosity solution of HJB equation \rf{HJB-equation}. The proof is complete.
\end{proof}

\section{Applications}\label{Sec5}

As stated in the Introduction, the stochastic differential formulation of recursive utility was introduced by Duffie and Epstein \cite{Duffie-Epstein-92}, which is actually a stochastic recursive control system. In this section, based on the connection between SDU and BSDE, we present an example to illustrate the application of our study. It should be pointed out that the inspiration of the model comes from Ivanov--Swishchuk \cite{Ivano-Swishchuk-05}, and for completeness, we would like to present the model in details.

\ms

Suppose that an investor can get a profit from his/her invested production. Denote the capital, the labor, and the consumption rate of the investor at time $t$ by $X(t)$, $\pi(t)$ and $c(t)$, respectively.
In 1928,  based on the assumption that the income of production is a general function of the capital and the labor, the following model was introduced by Ramsey \cite{Ramsey-1928}: 
$$
\frac{\mathrm{d} X(t)}{\mathrm{d} t}=f(X(t), \pi(t))-c(t).
$$
As we all know, when his/her investment contains a risky asset, such as stock, options, and futures, the investment involves uncertainty. Hence we would like to extend Ramsey's model to the following stochastic situation:
$$
\mathrm{d} X(t)=[f(X(t), \pi(t))-c(t)] \mathrm{d} t+\sigma(X(t)) \mathrm{d} W(t).
$$
%
%
However, the fitness of the model has been questioned for the basic assumption that there is no delay through the investment, which is not reasonable. In fact, the empirical evidence presents to us that the volatility of investment depends not only on the current state of the process but through its whole history.
%
%
Actually, as some examples shown in the Introduction, it is natural that the future development of many natural and social phenomena depends on their present state and historical information.
Therefore, it is reasonable to believe that in a consistent model of his investment, we should consider the past value of the capital during the investment. Hence, our model is further modified as
\begin{equation*}
\left\{\begin{aligned}
\ds \mathrm{d} X(t)=&\ [f(X_t, \pi(t))-c(t)] \mathrm{d} t+\sigma(X_t) \mathrm{d} W(t),\q t\in[0,T],\\
\ns\ds X(t)=&\ \g(t), \q t \in[-\delta, 0],
\end{aligned}\right.
\end{equation*}
where $X_t$ represents the segment of the path of the process $X(\cdot)$ from $t-\d$ to $t$.  A special case is that $X_t=X(t-\d)$ as in Chen--Wu \cite{Chen-Wu-12,Chen-Wu-10}. The function $f$ can be reasonably specific. For example, $f(X_t,\pi(t))=K f(X_t)^{\alpha} \pi^{\beta}(t)$, where $K, \alpha, \beta$ are some constants as in Gandolfo \cite{Gandolfo-96}. For simplicity, similar to Ivanov--Swishchuk \cite{Ivano-Swishchuk-05}, we set $\a=\b=1$ to rewrite our model as
%
%
%
\begin{equation}\label{wz4}
\left\{\begin{aligned}
\ds \mathrm{d} X(t)=&\ [K\pi(t) f(X_t)-c(t)] \mathrm{d} t+\sigma(X_t) \mathrm{d} W(t),\q t\in[0,T],\\
\ns\ds X(t)=&\ \gamma(t), \q t \in[-\delta, 0],
\end{aligned}\right.
\end{equation}
where $f(\g):\dbC\rightarrow \mathbb{R}$ and $\sigma(\g):\dbC\rightarrow \mathbb{R}$, and we require that the coefficients of (\ref{wz4}) satisfy (A1)-(A2) for $n=d=1$.
Now, we suppose that the SDU preference of the investor is a continuous time Epstein-Zin utility, and the utility is depicted by BSDE
\begin{equation}\label{12.7.2}
\left\{\begin{array}{l}
\ds d V(t)=-\frac{\vartheta}{1-\frac{1}{\psi}}(1-r) V(t)\left[\left(\frac{c(t)}{\left((1-r) V(t)\right)^{\frac{1}{1-r}}}\right)^{1-\frac{1}{\psi}}-1\right] d t+Z(t) d W(t), \\
\ns\ds V(T)=h\left(X_{T}\right),
\end{array}\right.
\end{equation}
where $h(\g): \dbC \rightarrow \mathbb{R}$ is a given Lipschitz continuous function. 
In BSDE \rf{12.7.2}, the coefficient $\vartheta>0$ represents the rate of time preference, the coefficient $r\in(0,1)\cup(1,\i)$ is the relative risk aversion, and $0<\psi \neq 1$ represents the elasticity of intertemporal substitution.
The investor's optimization goal is to maximize the utility
$
\max _{(\pi, c) \in \mathcal{U}} V(0)
$,
where
$$
\mathcal{U} \triangleq\left\{(\pi, c) \mid(\pi, c):[0, T] \times \Omega \rightarrow[a_1,a_2] \times\left[b_{1}, b_{2}\right]\ \text {and}\ \dbF \text {-adapted},\ \text {for given}\ a_1,a_2,b_1,b_2\in\mathbb{R}\right\}
$$
is the admissible control set.
Clearly, the generator of BSDE \rf{12.7.2} does not satisfy the Lipschitz condition with respect to the utility and the consumption at all times, however,  we can find the applications of our study in the non-Lipschitz situation.
Note that there are four cases that the generator is monotonic with respect to the utility as shown by Proposition 3.2 in Kraft--Seifried--Steffensen \cite{KSS-13}.
In view of the polynomial growth condition (A6) with respect to the utility, here we choose two situations for further investigation:
\begin{itemize}
  \item [(i)] $r>1$ and $\psi>1$,
  \item [(ii)] $r<1$ and $\psi<1$.
\end{itemize}

In both situations (i) and (ii), we can find suitable powers of utility such that the generator of BSDE \rf{12.7.2} is continuous and monotonic but non-Lipschitz with respect to the utility in its domain.
As for the continuity with respect to the consumption, it is obvious that both situations  are Lipschitz continuous if $b_{1}>0$. In particular, in the case of $b_{1}=0$, only the situation (i) satisfies the continuity but not Lipschitz continuity with respect to the consumption.

\ms

Therefore, under the assumptions (A3)-(A6), for all suitable non-Lipschitz situations, applying \autoref{u-HJB} we know that the value function of the investor is a viscosity solution of HJB equation
\begin{equation*}
\left\{\begin{split}
\ds &\sup _{(\pi,c)\in [a_1,a_2] \times[b_{1}, b_{2}]}
\Bigg\{\frac{\partial}{\partial t}u(t,\g)+\frac{1}{2}\sigma^2(\g)\nabla_\g^2 u(t,\g)+
 [K\pi f(\g)-c] \nabla_\g u(t,\g)\\
\ns\ds&\qq\qq\q + \frac{\vartheta}{1-\frac{1}{\psi}}(1-r) u(t,\g)\bigg[\bigg(\frac{c}{\left((1-r) u(t,\g)\right)^{\frac{1}{1-r}}}\bigg)^{1-\frac{1}{\psi}}-1\bigg]
\Bigg\}=0,\q (t,\g)\in[0,T)\ts\dbC,\\
\ns\ds & u(T,\g)=h(\g).
\end{split}\right.
\end{equation*}

\section{Proof of the auxiliary results}\label{Auxiliary}

We first give the detailed proofs to the lemmas in Subsection \ref{Sec3}.

\begin{proof}[Proof of \autoref{9.20.1}]
	The absence of a Lipschitz condition for the generator $g(t, \gamma, y, z, v)$ with respect to $y$ and $v$ brings about significant differences and challenges compared to the work of Peng \cite{Peng-92}.
	
	\ms
	
	To begin with, we would like to denote by $\left\{\sF_{t}^{s}; t \les s \les T\right\}$ the natural
	filtration of the Brownian motion $\{W(s)-W(t); t \les s \les T\}$ augmented by the $\dbP$-null sets of $\sF$, and define two subspaces of $\mathcal{U}$:
	$$\begin{aligned}
		&\mathcal{U}^t\deq\Big\{v \in \mathcal{U}\ \big| \
		v(s) \text { is } \mathscr{F}_{t}^{s} \text {-measurable for } t \les s \les T\Big\}, \\
		&\overline{\mathcal{U}}^t\deq\Big\{v \in \mathcal{U}\ \big| \
		v(s)=\sum_{j=1}^{N} v^{j}(s) 1_{A_{j}},\hb{ where } v^{j} \in \mathcal{U}^{t}\hb{ and } \{A_{j}\}_{j=1}^{N} \text { is a partition of }(\Omega, \sF_{t})\Big\}.
	\end{aligned}$$
	
	\textbf{Step 1}. We prove
	\begin{equation}\label{21.9.1.0}
		\esssup_{v \in \cU} J(t, \g ; v)=\esssup_{v \in \overline{\cU}^t} J(t, \g ; v).
	\end{equation}
	From the fact that $\overline{\cU}^t$ is a subset of $\mathcal{U}$, we have
	\begin{equation}\label{21.9.1.1}
		\esssup_{v \in \cU} J(t, \g ; v)\ges\esssup_{v \in \overline{\cU}^t} J(t, \g ; v).
	\end{equation}
	So we only need to prove the inverse inequality of \rf{21.9.1.1}, i.e.,
	\begin{equation}\label{21.9.1.2}
		\esssup_{v \in \cU} J(t, \g ; v)\les\esssup_{v \in \overline{\cU}^t} J(t, \g ; v).
	\end{equation}
	Note that $\overline{\cU}^t$ is dense in the space $\mathcal{U}$. Hence for each $v\in\cU$, there is a sequence $\{v^n\}_{n=1}^\i\in\overline{\cU}^t$ such that
	$$\lim_{n\ra\i}\dbE\int_t^T|v^n(s)-v(s)|^2ds=0.$$
	Moreover, there is a subsequence of  $\{v^n\}_{n=1}^\i$, still denoted by  $\{v^n\}_{n=1}^\i$ without loss of generality, which satisfies
	\begin{equation}\label{21.9.2.1}
		\lim_{n\ra\i}v^n(s)=v(s)\q \hb{a.s.}
	\end{equation}
	Denote by $(Y^{t,\g;v^n}(\cd),Z^{t,\g;v^n}(\cd))$ the adapted solution of BSDE \rf{BSDE} with $v$ replaced by $v^n$.
	Applying It\^o's lemma to $|Y^{t,\g;v^n}(s)-Y^{t,\g;v}(s)|^2e^{-\b s}$ on the interval $[t,T]$, together with the monotonic condition (A5), we have
	\begin{equation*}
		\begin{aligned}
			\ds&  \dbE|Y^{t, \g ; v^{n}}(t)-Y^{t, \g ; v}(t)|^{2}
			+\dbE\int_t^T\big[\b |Y^{t, \g ; v^{n}}(s)-Y^{t, \g ; v}(s)|^{2}
			+|Z^{t, \g ; v^{n}}(s)-Z^{t, \g ; v}(s)|^{2}\big]e^{\b (s-t)}ds\\
			\ns=&\ds e^{\b T}\dbE|\Phi(X^{t, \g ; v^{n}}_T)-\Phi(X^{t, \g ; v}_T)|^{2}
			+\dbE\int_{t}^{T}2\big[Y^{t, \g ; v^{n}}(s)-Y^{t, \g ; v}(s)\big]\\
			\ns\ds & \cd\big[g(s, X^{t, \g ; v^n}_s, Y^{t,\g;v^n}(s), Z^{t,\g;v^n}(s),v^{n}(s))
			-g(s, X^{t, \g ; v}_s, Y^{t, \g ; v}(s), Z^{t, \g ; v}(s), v(s))\big]e^{\b (s-t)}ds\\
			\ns\les&\ds C_p\dbE\|X^{t, \g ; v^{n}}_T-X^{t, \g ; v}_T\|_{\dbC}^{2}
			+\dbE\int_{t}^{T}\big[\frac{6\tilde L^2}{\b}\|X^{t, \g ; v^{n}}_s-X^{t, \g ; v}_s\|_{\dbC}^{2}
			+(\frac{\b }{2}+\mu)|Y^{t, \g ; v^{n}}(s)-Y^{t, \g ; v}(s)|^2\\
			\ns\ds &\q+\frac{6\tilde L^2}{\b}|Z^{t, \g ; v^{n}}(s)-Z^{t, \g ; v}(s)|^2
			+\frac{6}{\b}|g(s,0,0,0,v^n(s))-g(s,0,0,0,v(s))|^2\big]e^{\b (s-t)}ds,
		\end{aligned}
	\end{equation*}
	where and in the rest of this paper $C_p$ is a positive constant depending only on the given parameters, whose values may be changed from line to line. Let $\b=2(1+\mu)+6\tilde L^2+1$, and then by the above inequality we have
	\begin{equation}\label{21.9.1.3}
		\begin{aligned}
			\ds  \dbE|Y^{t, \g ; v^{n}}(t)-Y^{t, \g ; v}(t)|^{2}
			\les& C_p\dbE\|X^{t, \g ; v^{n}}_T-X^{t, \g ; v}_T\|_{\dbC}^{2}
			+C_p\dbE\|X^{t, \g ; v^{n}}_s-X^{t, \g ; v}_s\|_{\dbC}^{2}\\
			\ns\ds &+C_p\dbE\int_{t}^{T}|g(s,0,0,0,v^n(s))-g(s,0,0,0,v(s))|^2\big]ds.
		\end{aligned}
	\end{equation}
	For the first and second terms on the right hand side of \rf{21.9.1.3}, by \autoref{21.9.1.4} we have
	$$\dbE\|X^{t, \g ; v^{n}}_T-X^{t, \g ; v}_T\|^{2}_{\dbC}
	+\dbE\|X^{t, \g ; v^{n}}_s-X^{t, \g ; v}_s\|^{2}_{\dbC}
	\les C_{p}\dbE\int_{t}^{T}|v^n(s)-v(s)|^{2} d s.$$
	For the third term on the right hand side of \rf{21.9.1.3}, it follows from the dominated convergence theorem and \rf{21.9.2.1} that
	$$\lim_{n\ra\i}\dbE\int_{t}^{T}\big|g(s, 0,0,0, v^{n}(s)) -g(s, 0,0,0, v(s))\big|^{2}ds=0.$$
	Hence, taking the limits on both side of \rf{21.9.1.3}, we obtain
	$$\lim_{n\ra\i}\dbE|Y^{t, \g ; v^{n}}(t)-Y^{t, \g ; v}(t)|^{2}=0.$$
	Therefore, there is a subsequence of $\{v^n\}_{n=1}^\i$, still denoted by  $\{v^n\}_{n=1}^\i$ without loss of generality, such that
	$$\lim_{n\ra\i}Y^{t, \g ; v^{n}}(t)=Y^{t, \g ; v}(t)\q \hb{a.s.}$$
	In other words,
	$$\lim_{n\ra\i}J(t,\g,v^n)=J(t,\g,v)\q \hb{a.s.}$$
	Now, by the arbitrariness of $v$ and the definition of essential supremum, we obtain \rf{21.9.1.2}, and then \rf{21.9.1.0} follows.
	
	\ms
	
	\textbf{Step 2}.  We prove
	\begin{equation}\label{21.9.2.2}
		\esssup_{v \in \overline{\cU}^t} J(t, \g ; v)=\esssup_{v \in \cU^t} J(t, \g ; v).
	\end{equation}
	Obviously,
	\begin{equation}\label{wz1}
		\esssup_{v \in \overline{\cU}^t} J(t, \g ; v)\ges\esssup_{v \in \cU^t} J(t, \g ; v).
	\end{equation}
	So we only need to prove the inverse inequality of \rf{wz1}. By Lemma 3.1 in Chen--Wu \cite{Chen-Wu-12}, we have
	$$
	J(t, \g; v)=J(t, \g ; \sum_{j=1}^{N} 1_{A_{j}} v^{j})
	=\sum_{j=1}^{N} 1_{A_{j}} J(t, \g ; v^{j}),\q \forall v\in  \overline{\cU}^t,
	$$
	where $v^j$ is $\mathscr{F}_t^s$-measurable for $j=1,2,\cdots,N$. It is well known that
	$Y^{t,\g;v^{j}}(t)=J(t, \g ; v^{j})$, as the solution of BSDE, is deterministic in this case. Without lose of generality, assume
	$$J(t,\g;v^1)\ges J(t,\g;v^j),\q j=2,3,...,N.$$
	Then
	$$
	J(t, \g ; v) \les J(t, \g ; v^{1}) \les \esssup_{v \in \cU^t} J(t, \g ; v).
	$$
	Again, from the arbitrariness of $v$, we obtain
	$$\esssup_{v \in \overline{\cU}^t} J(t, \g; v)
	\les \esssup_{v \in \cU^{t}} J(t, \g; v).$$
	Hence the desired result \rf{21.9.2.2} holds.
	
	\ms
	
	Finally, \rf{21.9.1.0} and \rf{21.9.2.2} lead to
	\begin{equation}\label{wz2}
		\esssup_{v \in \cU} J(t, \g; v)
		= \esssup_{v \in \cU^{t}} J(t, \g; v),
	\end{equation}
	which implies that the value function $u$ defined in \rf{value-function} is a deterministic function.
\end{proof}

\begin{proof}[Proof of \autoref{9.23.2}]
	By \autoref{9.20.2}, for any $t\in[0,T]$, $\g,\bar\g\in\dbC$ and $v\in\cU$, we have
	\begin{align}\label{9.20.3}
		|J(t,\g;v)|=|Y^{t,\g;v}(t)|\les C\big( 1+\|\g\|_{\dbC}\big)
	\end{align}
	and
	\begin{align}\label{9.20.4}
		|J(t,\g;v)-J(t,\bar\g;v)|\les C\|\g-\bar\g\|_{\dbC}.
	\end{align}
	On the other hand, for arbitrary $\e>0$, there exist $v,\bar v\in\cU$ such that
	\begin{equation}\label{9.22.1}
		J(t,\g; v)\les u(t,\g)\les J(t,\g;\bar v)+\e\ \ \ {\rm and}\ \ \
		J(t,\bar\g;\bar v)\les u(t,\bar\g)\les J(t,\bar\g; v)+\e,
	\end{equation}
	which combining \rf{9.20.3} implies
	\begin{align*}
		-C\big( 1+\|\g\|_{\dbC}\big)
		\les J(t,\g; v)\les u(t,\g)\les J(t,\g;\bar v)+\e
		\les C\big( 1+\|\g\|_{\dbC}\big)+\e.
	\end{align*}
	By the arbitrariness of $\e$, \rf{9.21.2} holds.
	For \rf{9.21.1}, using \rf{9.22.1} again, we have
	\begin{align*}
		J(t,\g;v)-J(t,\bar\g,v)-\e\les u(t,\g)-u(t,\bar\g)\les J(t,\g;\bar v)-J(t,\bar\g; \bar v)+\e,
	\end{align*}
	which combining \rf{9.20.4} implies that
	\begin{align*}
		\ds |u(t,\g)-u(t,\bar\g)|\les\max\{|J(t,\g;v)-J(t,\bar\g,v)|,\ |J(t,\g;\bar v)-J(t,\bar\g; \bar v)|\}+\e\les C\|\g-\bar\g\|_{\dbC}+\e.
	\end{align*}
	Then \rf{9.21.1} follows from the arbitrariness of $\e$.
\end{proof}

\begin{proof}[Proof of \autoref{9.23.1}]
	The proof is divided into three steps.
	
	\ms
	
	\textbf{Step 1}. We consider a simple $\dbC$-valued random variable $\Upsilon\in L^{2}_{\dbF}(\Om;\dbC_t)$ with a form below
	\begin{equation}\label{12.18.1}
		\Upsilon=\sum^N_{i=1}\g^iI_{A_i},
	\end{equation}
	where
	$\g^i\in\dbC$ with $1\les i\les N$, $N\in\mathbb{N}$ and $\{A_i\}^N_{i=1}$ is a partition of $(\Om,\sF_t)$.
	For each $i=1,2,...,N,$ denote by $(Y^{t, \g^i ; v}(\cd),Z^{t, \g^i ; v}(\cd))$ the adapted solution of BSDE
	\begin{equation}\label{12.18.2}
		\begin{aligned}
			\ds Y^{t,\g^i;v}(s)=&\ \Phi(X_T^{t,\g^i;v})
			+\int_{s}^{T} g(r,X_r^{t,\g^i;v},Y^{t,\g^i;v}(r),Z^{t,\g^i;v}(r),v(r))dr\\
			\ns\ds& -\int_{s}^{T} Z^{t,\g^i;v}(r)dW(r),\q~ s\in[t,T].
		\end{aligned}
	\end{equation}
	Multiplying by $I_{A_i}$  on both side of BSDE \rf{12.18.2} and merging the corresponding terms, we get
	\begin{equation*}
		\begin{aligned}
			\ds &\ \sum_{i=1}^{N} 1_{A_{i}}Y^{t,\g^i;v}(s)\\
			\ns=&\ds \Phi\left(\sum_{i=1}^{N} 1_{A_{i}}X_T^{t,\g^i;v}\right)+\int_{s}^{T} g\left(r,\sum_{i=1}^{N} 1_{A_{i}}X_r^{t,\g^i;v},\sum_{i=1}^{N} 1_{A_{i}}Y^{t,\g^i;v}(r),\sum_{i=1}^{N} 1_{A_{i}}Z^{t,\g^i;v}(r),v(r)\right)dr\\
			\ns\ds&\ \ \ \ \ -\int_{s}^{T} \sum_{i=1}^{N} 1_{A_{i}}Z^{t,\g^i;v}(r)dW(r), \q~ s\in[t,T].
		\end{aligned}
	\end{equation*}
	By the existence and uniqueness of BSDE, it yields that
	\begin{equation*}
		Y^{t, \Upsilon ; v}(s)=\sum_{i=1}^{N} 1_{A_{i}}Y^{t,\g^i;v}(s)\ \ \ {\rm and}\ \ \
		Z^{t, \Upsilon ; v}(s)=\sum_{i=1}^{N} 1_{A_{i}}Z^{t,\g^i;v}(s),\q~s\in[t,T].
	\end{equation*}
	Then, by the definition of $J(\cd)$, we obtain
	$$
	Y^{t, \Upsilon ; v}(t)=\sum_{i=1}^{N} 1_{A_{i}} Y^{t, \g^{i} ; v}(t)
	=\sum_{i=1}^{N} 1_{A_{i}} J(t, \g^{i} ; v)
	=J(t, \sum_{i=1}^{N} 1_{A_{i}} \g^{i} ; v)=J(t, \Upsilon ; v).
	$$
	So the conclusion holds for the simple case.
	
	\ms
	
	\textbf{Step 2}. For any bounded $\Phi \in L^{2}_{\dbF}(\Om;\dbC_t)$, by \autoref{9.22.2}
	there is a sequence of simple $\dbC$-valued random variable
	\begin{equation}\label{wz3}
		\Upsilon^m=\sum_{i=1}^{m} 1_{A_{i}} \g^{i},
	\end{equation}
	which converges to $\Phi$ in $L^{2}_{\dbF}(\Om;\dbC_t)$. Here
	$\g^i\in\dbC$ with $1\les i\les m$, $m\in\mathbb{N}$ and $\{A_i\}^m_{i=1}$ is a partition of $(\Om,\sF_t)$. By \autoref{9.20.2}, we have
	$$
	\mathbb{E}|Y^{t, \Phi; v}(t)-Y^{s, \Upsilon^{m} ; v}(t)|^2
	\les C \mathbb{E}\|\Phi-\Upsilon^{m}\|^2_{\dbC} \rightarrow 0\ \ \text { as }\ m \rightarrow \infty.
	$$
	On the other hand, for the cost functional $J$ defined in \rf{cost-functional}, by \rf{9.20.4} we have
	$$
	\mathbb{E}|J(t, \Phi; v)-J(t, \Upsilon^m ; v)|
	\les C \mathbb{E}\|\Phi-\Upsilon^{m}\|^2_{\dbC} \rightarrow 0\ \ \text { as }\ m \rightarrow \infty,
	$$
	which combining the result $Y^{t, \Upsilon^{m} ; v}(t)=J(t, \Upsilon^{m} ; v)$ implies that the conclusion holds in this case.
	
	\ms
	
	\textbf{Step 3}. For any $\Psi\in L^{2}_{\dbF}(\Om;\dbC_t)$, we define $\Phi^{m}=(\Psi_t(r) \wedge m) \vee(-m)$ for $m\in\mathbb{N}$, $r\in[t-\delta,t]$. Then $\Phi^{m}\in L^{2}_{\dbF}(\Om;\dbC_t)$ is bounded and
	$$
	\mathbb{E}|Y^{t, \Psi; v}(t)-Y^{s, \Phi^{m} ; v}(t)|^2 \les C \mathbb{E}\|\Psi-\Phi^{m}\|^2_{\dbC} \rightarrow 0\ \ \text { as }\ m \rightarrow \infty.
	$$
	The desired result follows by a similar argument as Step 2.
\end{proof}

\begin{proof}[Proof of \autoref{9.23.6}] The proof is divided into three steps.

	\textbf{Step 1}.
	For the simple $\dbC$-valued random variable $\Upsilon$ as \rf{12.18.1}, \rf{9.23.3} follows from
	$$
	Y^{t, \Upsilon ; v}(t)=\sum_{i=1}^{N} 1_{A_{i}} Y^{t, \g^{i} ; v}(t)=\sum_{i=1}^{N} 1_{A_{i}}J(t, \g^{i} ; v)
	\les \sum_{i=1}^{N} 1_{A_{i}} u(t, \g^{i})=u(t, \Upsilon), \q \forall v\in\cU.
	$$
	To prove (\ref{9.23.4}), note that for each $\g^{i}$, by (\ref{wz2}) there exists an admissible control $v_i\in\mathcal{U}^t$ such that
	$$u(t,\g^{i})\les Y_t^{t,\g^{i};v_i}+\varepsilon.$$
	Taking $v=\sum\limits_{i=1}^Nv_iI_{A_i}\in\mathcal{U}$ we have
	\begin{equation*}\label{pz41}
		Y_t^{t,\Upsilon;v}+\varepsilon=\sum_{i=1}^N(Y_t^{t,\g^i;v_i}+\varepsilon)I_{A_i}\ges\sum_{i=1}^Nu(t,\g^i)I_{A_i}=u(t,\Upsilon).
	\end{equation*}
	So (\ref{9.23.4}) also holds for the simple $\dbC$-valued random variable.
	\ms
	
	\textbf{Step 2}. For any $\Psi \in L^{2}_{\dbF}(\Om;\dbC_t)$, we combine Steps 2-3 in Lemma \ref{9.23.1} by diagonal rule to get a sequence of simple $\dbC$-valued random variables $\Upsilon^{m}$ with a form as \eqref{wz3}, $m\in\mathbb{N}$, which converges to $\Psi$ in the space $L^{2}_{\dbF}(\Om;\dbC_t)$. Hence, by \autoref{9.20.2} and \autoref{9.23.2}, we have for any $v\in\cU$,
	$$
	\lim _{m \rightarrow \infty} Y^{t, \Upsilon^{m} ; v}(t)=Y^{t, \Phi ; v}(t)\ \text { a.s. }\q \hb{ and }\q
	\lim _{m \rightarrow \infty} u(t, \Upsilon^{m})=u(t, \Phi)\ \text { a.s., }
	$$
	which combining $Y^{t, \Upsilon^m ; v}(t)\les u(t,\Upsilon^m)$, $m=1,2,\cdots$, implies that \rf{9.23.3} holds.

	To prove (\ref{9.23.4}), note that for arbitrary $\e>0$, there exists $m\in\mathbb{N}$ such that
	$$\mathbb{E}\|\Psi-\Upsilon^m\|^2_{\dbC}\les \frac{\e}{3C}.$$
	By \autoref{9.20.2} and \autoref{9.23.2} again, it yields that for any $u\in\cU$,
	\begin{equation}\label{9.23.5}
		|Y^{t,\Psi;v}(t)-Y^{t,\Upsilon^m;v}(t)|\les \frac{\e}{3}\qq\hb{and}\qq |u(t,\Psi)-u(t,\Upsilon^m)|\les\frac{\e}{3} .
	\end{equation}
	Then for each $\g^i$, by (\ref{wz2}) there is an admissible control $v^i\in\cU^t$ such that
	$$u(t,\g^i)\les Y^{t,\g^i;v^i}(t)+\frac{\e}{3}.$$
	Now, we set $v=\sum\limits_{i=1}^{m} 1_{A_{i}} v^i$, and by \rf{9.23.5} it yields that
	\begin{equation*}
		\begin{aligned}
			\ds Y^{t,\Psi; v}(t) & \ges-|Y^{t, \Upsilon^m ; v}(t)-Y^{t, \Psi, v}(t)|+Y^{t, \Upsilon^m ; v}(t) \\
			\ns\ds & \ges-\frac{\e}{3}+\sum_{i=1}^{m} 1_{A_{i}} Y^{t, \g^{i} ; v^{i}}(t) \\
			\ns\ds & \ges-\frac{\e}{3}+\sum_{i=1}^{m} 1_{A_{i}}\big[u(t, \g^{i})-\frac{\e}{3}\big]\\
			\ns\ds & =-\frac{2}{3} \e+\sum_{i=1}^{m} 1_{A_{i}} u(t, \g^{i}) \\
			\ns\ds &=-\frac{2}{3} \e+u(t, \Upsilon^m) \ges -\e+u(t, \Psi),
		\end{aligned}
	\end{equation*}
	which implies that \rf{9.23.4} holds.
	%
\end{proof}

Next, we give the detailed proofs to the lemmas in Subsection \ref{Sec4}.

\begin{proof}[Proof of \autoref{gn-g}]
	Note that $\int_{\mathbb{R}} \rho_{n}(a) d a=1$. Then we have
	$$
	g_{n}(t, \g, y, v)-g(t, \g, y, v)=\int_{\mathbb{R}}(g(t, \g, y-a, v)-g(t, \g, y, v)) \rho_{n}(a) d a.
	$$
	On the other hand, for each compact set $K \subset[0, T] \times \dbC \times \mathbb{R} \times U$, there is a compact set $\widetilde{K}$ such that $(t, \g, y-a, v) \in \widetilde{K}$ for any $(t,\g, y, v) \in K$ and $a \in[-1,1] .$
	Notice that since $g(t, \g, y, v)$ is continuous with respect to $(t, y, v)$ and Lipschitz continuous with respect to $\g$, $g(t, \g, y, v)$ is continuous and uniformly continuous with respect to $(t, \g, y, v)$ in the compact set $\widetilde{K}$. Therefore, for arbitrary $\e>0$,  as $n$ is sufficiently large, we have
	\begin{align*}
\sup _{(t, \g, y, v) \in K}|g_{n}(t, \g, y, v)-g(t, \g, y, v)|&\les\sup _{(t, \g, y, v) \in K} \int_{|a| \les \frac{1}{n}}|g(t, \g, y-a, v)-g(t, \g, y, v)| \rho_{n}(a) d a \\
		&\les\varepsilon \int_{|a| \les \frac{1}{n}} \rho_{n}(a) d a=\varepsilon.
	\end{align*}
	So the desired conclusion is derived.
\end{proof}

\begin{proof}[Proof of \autoref{Yn-Y}]
	Firstly, it is easy to check that the smootherized generator $g_n$ satisfies the assumptions (A3)-(A6).
	Then, using It\^o's lemma to $|Y^{t,\g,n;v}(s)-Y^{t,\g;v}(s)|^2e^{\b(s-t)}$ on the interval $[t,T]$, we have for any $(t,\g)\in K$,
	\begin{equation*}
		\begin{aligned}
			\ds &\dbE[|Y^{t, \g, n ; v}(t)-Y^{t, \g ; v}(t)|^{2}]
			+\dbE\int_{t}^{T}\left[\beta\left|Y^{t, \g, n ; v}(s)-Y^{t, \g ; v}(s)\right|^{2}
			+\left|Z^{t, \g, n ; v}(s)-Z^{t, \g ; v}(s)\right|^{2}\right] e^{\beta(s-t)} d s \\
			\ns=&\ds 2\dbE\int_{t}^{T}  \big[Y^{t, \g, n ; v}(s)-Y^{t, \g ; v}(s)\big]
			\cd \big[g_{n}(s, X_{s}^{t, \g ; v}, Y^{t, \g,n ; v}(s), v(s))
			-g(s, X_{s}^{t, \g ; v}, Y^{t, \g ; v}(s), v(s))\big] e^{\beta(s-t)} d s\\
			\ns=&\ds 2\dbE\int_{t}^{T}  \big[Y^{t, \g, n ; v}(s)-Y^{t, \g ; v}(s)\big]
			\cd \big[g_{n}(s, X_{s}^{t, \g ; v}, Y^{t, \g,n ; v}(s), v(s))
			-g_n(s, X_{s}^{t, \g ; v}, Y^{t, \g ; v}(s), v(s))\big] e^{\beta(s-t)} d s\\
			\ns&\ds+\dbE\int_{t}^{T} 2\big[Y^{t, \g, n ; v}(s)-Y^{t, \g ; v}(s)\big]
			\cd \big[g_{n}(s, X_{s}^{t, \g ; v}, Y^{t, \g ; v}(s), v(s))
			-g(s, X_{s}^{t, \g ; v}, Y^{t, \g ; v}(s), v(s))\big] e^{\beta(s-t)} d s\\
			\ns\les&\ds 2\dbE\int_{t}^{T} \mu |Y^{t, \g, n ; v}(s)-Y^{t, \g ; v}(s)|^2 e^{\beta(s-t)} ds
			+\dbE\int_t^T \Big[\frac{\b}{2}|Y^{t, \g, n ; v}(s)-Y^{t, \g ; v}(s)|^2\\
			\ns\ds &\q +\frac{2}{\b}\big| g_{n}(s, X_{s}^{t, \g ; v}, Y^{t, \g ; v}(s), v(s))
			-g(s, X_{s}^{t, \g ; v}, Y^{t, \g ; v}(s), v(s))\big|^2\Big]e^{\beta(s-t)} ds,
		\end{aligned}
	\end{equation*}
	where in the last inequality we have used H\"{o}lder and inequality (A5) satisfied by $g_n$.
	Now, taking $\b=4\mu+2$ we have
	\begin{equation*}
		\begin{aligned}
			\ds & \dbE[|Y^{t, \g, n ; v}(t)-Y^{t, \g ; v}(t)|^{2}] \\
			\ns\les&\ds  C_{p} \dbE\int_{t}^{T}\big|g_{n}(s, X_{s}^{t, \g ; v}, Y^{t, \g; v}(s), v(s) )
			-g(s, X_{s}^{t, \g ; v}, Y^{t, \g ; v}(s), v(s))\big|^{2} d s \\
			\ns=&\ds C_{p} \dbE\int_{t}^{T}|g_{n}-g|^{2}
			1_{\{\|X_{s}^{t, \g ; v}\|_{\dbC}\ges N\} \cup\{\sup\limits_{s \in[t, T]}|Y^{t, \g ; v}(s)| \ges N\}\}} d s \\
			\ns&\ds+C_{p} \dbE\int_{t}^{T}|g_{n}-g|^{2} 1_{\{\{\|X_{s}^{t, \g ; v}\|_{\dbC}<N\} \cap\{\sup\limits_{s \in[t, T]}|Y^{t, \g ; v}(s)|<N\}\}} d s \\
			\ns\les &\ds C_{p} \dbE\int_{t}^{T}|g_{n}-g|^{2} 1_{\|X_{s}^{t, \g ; v}\|_{\dbC} \ges N\}} d s
			+C_{p} \dbE\int_{t}^{T}|g_{n}-g|^{2} 1_{\{\sup\limits_{s \in[t, T]}|Y^{t,\g ; v}(s)| \ges N\}} d s \\
			\ns&\ds+C_{p} \dbE\int_{t}^{T}|g_{n}-g|^{2} 1_{\{\{\|X_{s}^{t, \g ; v}\|_{\dbC}<N\} \cap\{\sup\limits_{s \in[t, T]}|Y^{t, \g ; v}(s)|<N\}\}} d s.
		\end{aligned}
	\end{equation*}
	For simplicity, we define
	\begin{align*}
		\ds  J_1\deq&\ \dbE\int_{t}^{T}
		\big|g_{n}(s, X_{s}^{t, \g ; v}, Y^{t, \g ; v}(s), v(s) )
		-g(s, X_{s}^{t, \g ; v}, Y^{t, \g ; v}(s), v(s))\big|^{2}
		1_{\|X_{s}^{t, \g ; v}\|_{\dbC} \ges N\}} d s,\\
		\ns\ds  J_2\deq&\ \dbE\int_{t}^{T}
		\big|g_{n}(s, X_{s}^{t, \g ; v}, Y^{t, \g ; v}(s), v(s) )
		-g(s, X_{s}^{t, \g ; v}, Y^{t, \g ; v}(s), v(s))\big|^{2}
		1_{\{\sup\limits_{s \in[t, T]}|Y^{t,\g ; v}(s)| \ges N\}} d s,\\
		\ns\ds  J_3\deq&\ \dbE\int_{t}^{T}
		\big|g_{n}(s, X_{s}^{t, \g ; v}, Y^{t, \g ; v}(s), v(s) )
		-g(s, X_{s}^{t, \g ; v}, Y^{t, \g ; v}(s), v(s))\big|^{2}\\
		&\qq\q \cd 1_{\{\{\|X_{s}^{t, \g ; v}\|_{\dbC}<N\} \cap\{\sup\limits_{s \in[t, T]}|Y^{t, \g ; v}(s)|<N\}\}} d s.
	\end{align*}
	Next we will deal with $J_1$, $J_2$ and $J_3$ in turn.
	
	\ms
	
	First, for $J_1$, by (A6) we have
	\begin{align*}\label{9.30.1}
		\ds  &\sup_{(t,\g)\in K}J_1 \nn\\
		\ns\les\ds  & \sup_{(t,\g)\in K} 2\dbE\int_{t}^{T}
		\Big(\big|g_{n}(s, X_{s}^{t, \g ; v}, Y^{t, \g ; v}(s), v(s) )\big|^2
		+\big|g(s, X_{s}^{t, \g ; v}, Y^{t, \g ; v}(s), v(s))\big|^{2}\Big)
		1_{\|X_{s}^{t, \g ; v}\|_{\dbC} \ges N\}} d s \nn\\
		\ns\les\ds  & \sup_{(t,\g)\in K} C_p\dbE\int_{t}^{T}
		\Big(1+\|X_{s}^{t, \g ; v}\|_{\dbC}^2
		+|Y^{t, \g; v}(s)|^{2p}\Big)
		1_{\|X_{s}^{t, \g ; v}\|_{\dbC} \ges N\}} d s \nn\\
		\ns\les\ds  & \sup_{(t,\g)\in K} C_p
		\Big[\dbE\int_{t}^{T} \Big(1+\|X_{s}^{t, \g ; v}\|_{\dbC}^4
		+|Y^{t, \g; v}(s)|^{4p}\Big)ds\Big]^{\frac{1}{2}}
		\cdot \Big[\sup_{(t,\g)\in K}\dbP(\|X_{s}^{t, \g ; v}\|_{\dbC} \ges N)\Big]^{\frac{1}{2}}.
	\end{align*}
	By Chebychev's inequality and \autoref{21.9.1.4}, we have for any $N>0$,
	$$\dbP(\|X_{s}^{t, \g ; v}\|_{\dbC} \ges N)
	\les \frac{1}{N^2}\dbE\|X_{s}^{t, \g ; v}\|^2_{\dbC}\les \frac{C_p}{N^2}(1+\|\g\|^2_{\dbC}).$$
	Note that the initial value $\g$ is bounded, so for arbitrary $\d>0$, when $N$ is sufficiently large, it yields that
	\begin{equation}\label{9.30.2}
		\sup_{(t,\g)\in K}\dbP(\|X_{s}^{t, \g ; v}\|_{\dbC} \ges N)\les \d.
	\end{equation}
	In addition, Propositions \ref{9.22.2} and \ref{21.9.1.4} imply that
	$$ \sup_{(t,\g)\in K}
	\Big[\dbE\int_{t}^{T} \Big(1+\|X_{s}^{t, \g ; v}\|_{\dbC}^4
	+|Y^{t, \g; v}(s)|^{4p}\Big)ds\Big]^{\frac{1}{2}}\les \i.$$
	This, together with \rf{9.30.2}, indicates that  for arbitrary $\e>0$, there is a sufficiently large $N_1$ such that when $N\ges N_1$, we have for all $n\in\dbN$,
	$$J_1\les\e,\q\hb{uniformly in $K$}.$$
	
	For $J_2$, \autoref{9.20.2} implies
	$$\dbE\Big[ \sup_{s\in[t,T]}|Y^{t,\g;v}(s)|^{2p} \Big]
	\les C_p(1+\|\g\|_{\dbC}^{2p}).$$
	Similar to the above discussion for $J_1$, the Chebychev's inequality leads to that for arbitrary $\d>0$,
	\begin{equation*}
		\sup_{(t,\g)\in K}\dbP(\sup_{s\in[t,T]}|Y^{t,\g;v}(s)| \ges N)\les \d.
	\end{equation*}
	Then there is a sufficiently large $N_2$ such that when $N\ges N_1$, we have for all $n\in\dbN$,
	$$J_2\les\e,\q\hb{uniformly in $K$}.$$
	
	For $J_3$, denote $\tilde{N}=N_1\vee N_2$. For $v\in\cU$, we have
	\begin{align*}
		\ds \sup_{(t,\g)\in K}J_3
		\les &\ \dbE\int_{t}^{T} \sup_{(t,\g)\in K}
		\big|g_{n}(s, X_{s}^{t, \g ; v}, Y^{t, \g; v}(s), v(s) )
		-g(s, X_{s}^{t, \g ; v}, Y^{t, \g ; v}(s), v(s))\big|^{2} \nn\\
		\ns\ds &\qq\q \cd 1_{\{\{\|X_{s}^{t, \g ; v}\|_{\dbC}<\tilde N\}
			\cap\{\sup\limits_{s \in[t, T]}|Y^{t, \g ; v}(s)|<\tilde N\}\}} d s\nn \\
		\ns\ds \les &\ \dbE\ \int_t^T\sup_{(t,\g,y)\in[0,T]\ts B^{\dbC}_{\tilde N}\ts[-\tilde N,\tilde N]}|g_n(t,\g,y,v)-g(t,\g,y,v)|^2ds,
	\end{align*}
	where $B^{\dbC}_{\tilde N}$ is the closed ball with the origin $0$ and the radium $\tilde N$ in $\dbC$. By the dominated convergence theorem and \autoref{gn-g}, we have, as $n$ is sufficiently large,
	$$J_3\les\e,\q\hb{uniformly in $K$}.$$
	
	Therefore, due to the arbitrariness of $\e$, the conclusion follows and the proof is complete.
\end{proof}

\begin{proof}[Proof of \autoref{un-u}]
	For arbitrary $\e>0$,  $t \in[0, T]$ and $\g\in\dbC$,
	there is a control $\bar v \in \mathcal{U}$ such that
	$$
	u(t, \g)<Y_{t}^{t, \g ; \bar v}+\varepsilon.
	$$
	Hence we have
	\begin{equation}\label{10.15.1}
		u(t, \g)-u_{n}(t, \g)=u(t, \g)-\sup _{v \in \mathcal{U}} Y^{t, \g, n ;  v} (t)
		\les Y^{t, \g ; \bar v}(t)+\varepsilon-Y^{t, \g, n ; \bar v}(t).
	\end{equation}
	On the other hand,  there is a control $\tilde v \in \mathcal{U}$ such that
	$$
	u_{n}(t, \g) \les Y^{t,\g, n ; \tilde v}(t)+\varepsilon,
	$$
	which gives
	\begin{equation}\label{10.15.2}
		u(t, \g)-u_{n}(t, \g) \ges u(t, \g)-Y^{t, \g, n ; \tilde v}(t)-\varepsilon
		\ges Y^{t, \g; \tilde v}(t)-Y^{t, \g, n ; \tilde v}(t)-\varepsilon.
	\end{equation}
	Combining \rf{10.15.1} and \rf{10.15.2}, we obtain
	$$
	|u(t, \g)-u_{n}(t, \g)| \les \dbE[|Y^{t, \g ; \bar v}(t)-Y^{t, \g, n ; \bar v}(t)|]
	+\dbE[|Y^{t, \g ; \tilde v}(t)-Y^{t, \g, n ; \tilde v}(t)|]+2\varepsilon.
	$$
	Note that  $\bar v$ and $\tilde v$ are given admissible controls. Then by \autoref{Yn-Y} it yields that for each compact set $K \subset[0, T] \times \dbC$,
	$$
	\lim _{n \rightarrow \infty} \sup _{(t, \g) \in K} \dbE[|Y^{t, \g ; \bar v}(t)-Y^{t, \g, n ; \bar v}(t)|]+\dbE[|Y^{t, \g ; \tilde v}(t)-Y^{t, \g, n ; \tilde v}(t)|]=0.
	$$
	Finally, owing to the arbitrariness of $\varepsilon$, we obtain the uniform convergence of the value functions $u_{n}$ to $u$ in $K$.
\end{proof}

\begin{proof}[Proof of \autoref{hn-h}]
	For any $t \in[0, T],\ \g \in\dbC,\ r \in \mathbb{R},\ p \in \mathbb{R}^{n},\ A \in \mathbb{S}^{n},\ v \in U$, we set
	$$
	\mathcal{A}=\langle p, b(t,\g,v)\rangle
	+\frac{1}{2}Tr\left(\sigma(t,\g,v)\sigma^*(t,\g,v)A\right)+g_n(t,\g,r,v)
	$$
	and
	$$
	\mathcal{B}=\langle p, b(t,\g,v)\rangle
	+\frac{1}{2}Tr\left(\sigma(t,\g,v)\sigma^*(t,\g,v)A\right)+g(t,\g,r,v).
	$$
	Note that
	$$
	H_{n}-H=\sup _{v \in U} \mathcal{A}-\sup _{v \in U} \mathcal{B} \les \sup _{v \in U}(\mathcal{A}-\mathcal{B})=\sup _{v \in U}\left(g_{n}-g\right) \les \sup _{v \in U}\left|g_{n}-g\right|
	$$
	and
	$$
	H-H_{n}=\sup _{v \in U} \mathcal{B}-\sup _{v \in U} \mathcal{A} \les \sup _{v \in U}(\mathcal{B}-\mathcal{A})=\sup _{v \in U}\left(g-g_{n}\right) \les \sup _{v \in U}\left|g-g_{n}\right|,
	$$
	which implies
	$$
	\left|H_{n}-H\right| \les \sup _{v \in U}\left|g_{n}(t, \g, r, v)-g(t, \g, r, v)\right|.
	$$
	Note that for each compact set $K \subset[0, T] \times \dbC \times \mathbb{R} \times \mathbb{R}^{n} \times \mathbb{S}^{n}$, and any $(t, \g, r, p, A) \in K$, $v\in U$, there is a compact set $\widetilde K\in [0, T] \times \dbC \times \mathbb{R} \times U$ such that $(t, \g, r, v) \in\widetilde K$. So by \autoref{gn-g} we have
	$$
	\begin{aligned}
		\ds & \lim _{n \rightarrow \infty} \sup _{(t,\g, r, p, A) \in K}\left|H_{n}(t,\g, r, p, A)-H(t, \g, r, p, A)\right| \\
		\ns\ds \les & \lim _{n \rightarrow \infty} \sup _{(t, \g, r, p, A) \in K} \sup _{v \in U}\left|g_{n}(t, \g, r, v)-g(t,\g, r, v)\right| \\
		\ns\ds \les & \lim _{n \rightarrow \infty} \sup _{(t, \g, y, v) \in \widetilde{K}}\left|g_{n}(t, \g, y, v)-g(t, \g, y, v)\right|=0.
	\end{aligned}
	$$
	Therefore, the uniform convergence of $H_{n}$ to $H$ in $K$ follows from above.
\end{proof}

\begin{proof}[Proof of \autoref{10.24.2}]
	The proof is similar to the proof of Lemma 6.2 in Fleming--Soner \cite{Fleming-Soner-06}.
	Let $\phi\in C^{1,2}_{\text{Lip}}([0,T]\ts\dbC;\dbR)$ and $(\bar t,\bar\g)\in[0,T]\ts\dbC$ be a strict minimizer of $\phi-u$ with $\phi (\bar t,\bar\g)=u(\bar t,\bar\g)$.
	On one hand, due to that $u_n$ converges to $u$ uniformly in each compact subset of their domain, there exists a sequence $(t_n,\g_n)\rightarrow (\bar t,\bar\g)$ as $n\rightarrow\i$, such that $(t_n,\g_n)$ is a local minimum of $\phi-u_n$.
	On the other hand, if we set $\tilde \phi(t,\g)\deq \phi(t,\g)+u_n(t_n,\g_n)-\phi(t_n,\phi_n)$, then $(t_n,\g_n)$ is a local minimum of $\tilde\phi-u_n$ and $\tilde\phi(t_n,\g_n)=u_n(t_n,\g_n)$. Hence, from the viscosity property of $u_n$, we have that
	$$
	\frac{\partial}{\partial t} \tilde\phi(t_n, \g_n)+H_{n}\big(t_n, \g_n, \tilde\phi(t_n, \g_n), D_{x} \tilde\phi(t_n, \g_n), D_{x}^{2} \tilde\phi(t_n, \g_n)\big)\ges0.
	$$
	Finally, leting $n\rightarrow\i$ and using the uniform convergence of $H^n$ to $H$, we conclude that $u$ is a viscosity subsolution of the limiting equation \rf{11.4.1}. Similarly the supersolution property can be proved similarly.
\end{proof}

\section{Conclusion}

In conclusion, we show that under the assumptions (A1)-(A6), the dynamic programming principle for delayed stochastic recursive optimal control problem holds under the non-Lipschitz circumstances in \autoref{GDDP}.
Moreover, as proved in \autoref{u-HJB}, the value function of the delayed stochastic recursive optimal control problem is the viscosity solution of the corresponding HJB equation \rf{HJB-equation}, in the setting of non-Lipschitz generator  $g(t,\g, y, v)$ of BSDE serving as the recursive cost functional. However, when the generator $g$ also depends on $z$, the second one of the solution pair, the result is still open, and we hope to solve it in the near future.
Finally, in Section 5, to demonstrate the application of our theoretical study to mathematical finance models, a consumption-investment problem under the delayed continuous-time Epstein-Zin utility with a non-Lipschitz is presented and the HJB equation which the value function satisfies is given.


\end{document}